\numberwithin{equation}{section}
\newtheorem{theorem}[equation]{Theorem}
\newtheorem{lemma}[equation]{Lemma}
\newtheorem{prop}[equation]{Proposition}
\newtheorem{cor}[equation]{Corollary}
\theoremstyle{definition}
\def\CC{\mathbb{C}}
\def\FF{\mathbb{F}}
\def\QQ{\mathbb{Q}}
\def\ZZ{\mathbb{Z}}
\def\frako{\mathfrak{o}}
\def\frakp{\mathfrak{p}}
\def\bt#1{\tilde{\mathbf{#1}}}
\def\calR{\mathcal{R}}
\def\bB{\mathbf{B}}
\def\barF{\overline{F}}
\def\barx{\overline{x}}
\def\bartheta{\overline{\theta}}
\def\barQQ{\overline{\QQ}}
\def\barZZ{\overline{\ZZ}}
\def\Betti{\mathrm{Betti}}
\def\crys{\mathrm{crys}}
\def\dR{\mathrm{dR}}
\def\et{\mathrm{et}}
\def\rig{\mathrm{rig}}
\def\st{\mathrm{st}}
\def\tors{\mathrm{tors}}
\def\dual{\vee}
\DeclareMathOperator{\Ext}{Ext}
\DeclareMathOperator{\Fil}{Fil}
\DeclareMathOperator{\Frac}{Frac}
\DeclareMathOperator{\Gal}{Gal}
\DeclareMathOperator{\GL}{GL}
\DeclareMathOperator{\Hom}{Hom}
\DeclareMathOperator{\id}{id}
\DeclareMathOperator{\rank}{rank}
\begin{document}

\title{Some new directions in $p$-adic Hodge theory}
\author{Kiran S. Kedlaya}
\date{February 3, 2009}
\keywords{$p$-adic Hodge theory, $B$-pairs, Galois cohomology, Tate
local duality}
\subjclass{11S20}

\maketitle

\begin{abstract}
We recall some basic constructions from $p$-adic Hodge theory, then
describe some recent results in the subject. We chiefly discuss the
notion of $B$-pairs, introduced recently by Berger, which provides
a natural enlargement of the category of $p$-adic Galois representations.
(This enlargement, in a different form, figures in recent work
of Colmez, Bella\"\i che, and Chenevier on trianguline representations.)
We also discuss results of Liu that indicate that the formalism of
Galois cohomology, including Tate local duality, extends to $B$-pairs.
\end{abstract}

\section{Setup and overview}

Throughout, $K$ will denote a finite extension of the field $\QQ_p$
of $p$-adic numbers, and $G_K = \Gal(\barQQ_p/K)$ will denote the
absolute Galois group of $K$. We will write $\CC_p$ for the completion
of $\barQQ_p$; it is algebraically closed, and complete for a
nondiscrete valuation. For any field $F$
carrying a valuation (like $K$ or $\CC_p$), we write $\frako_F$ for the
valuation subring.

One may think of $p$-adic Hodge theory as the $p$-adic analytic study of
\emph{$p$-adic representations} of $G_K$, by which we mean
finite dimensional $\QQ_p$-vector spaces $V$ equipped with
continuous homomorphisms $\rho: G_K \to \GL(V)$. (One might want to allow
$V$ to be a vector space over a finite extension of $\QQ_p$; for ease
of exposition, I will only retain the $\QQ_p$-structure in this discussion.)
A typical example of a $p$-adic representation is the (geometric) $p$-adic
\'etale cohomology $H^i_{\et}(X_{\barQQ_p}, \QQ_p)$ of an
algebraic variety $X$ defined over $K$.
Another typical example is the restriction to $G_K$ of a global
Galois representation $G_F \to \GL(V)$, where $F$ is a number field,
$K$ is identified with the completion of $F$ at a place above $p$,
and $G_K$ is identified with a subgroup of $G_F$;
this agrees with the previous construction if the global representation
itself arises as $H^i_{\et}(X_{\barF}, \QQ_p)$ for a variety $X$ over $F$.

Examples of this sort may be thought of as having a ``geometric origin'';
it turns out that there are many $p$-adic representations without this
property. For instance, there are several constructions that start
with the $p$-adic representations associated to classical modular forms
(which do have a geometric origin), and produce new $p$-adic representations
by $p$-adic interpolation. These constructions include the $p$-adic
families of Hida \cite{hida}, and the eigencurve of Coleman and Mazur
\cite{coleman-mazur}. (Note that these are \emph{global} representations,
so one has to first restrict to a decomposition group to view them within
our framework.)

Our purpose here is to first recall the basic framework of
$p$-adic Hodge theory, then describe some new results. One important
area of application is Colmez's
work on the $p$-adic local Langlands correspondence for
2-dimensional representations of $G_{\QQ_p}$
\cite{colmez, colmez2, colmez3, colmez4}.

\section{The basic strategy}

The basic methodology of $p$-adic Hodge theory, as introduced by Fontaine,
is to linearize the data of a $p$-adic representation $V$
by tensoring with a suitable topological $\QQ_p$-algebra $\bB$
equipped with a continuous $G_K$-action, then forming the space 
\[
D_{\bB}(V) = (V \otimes_{\QQ_p} \bB)^{G_K}
\]
of Galois invariants. 
One usually asks for $\bB$ to be \emph{regular},
which means that $\bB$ is a domain,
$(\Frac \bB)^{G_K} = \bB^{G_K}$ (so the latter is a field),
and any $b \in \bB$ for which $\QQ_p \cdot
b$ is stable under
$G_K$ satisfies $b \in \bB^\times$. It also forces the map
\begin{equation} \label{eq:admissible}
D_{\bB}(V) \otimes_{\bB^{G_K}} \bB \to V \otimes_{\QQ_p} \bB
\end{equation}
to be an injection; one says that $V$ is 
\emph{$\bB$-admissible} if \eqref{eq:admissible} is a bijection,
or equivalently, if the inequality $\dim_{\bB^{G_K}} D_{\bB}(V) 
\leq \dim_{\QQ_p} V$ is an equality.

In particular, Fontaine defines period rings
$\bB_{\crys}, \bB_{\st}, \bB_{\dR}$; we say $V$ is
\emph{crystalline}, \emph{semistable}, or \emph{de Rham} if it is
admissible for the corresponding period ring.
We will define these rings shortly; for now,
consider the following result, conjectured by Fontaine and Jannsen, and
proved by Fontaine-Messing, Faltings, Tsuji, et al.
\begin{theorem}
Let $V = H^i_{\et}(X_{\barQQ_p}, \QQ_p)$ for $X$ a smooth proper
scheme over $K$.
\begin{enumerate}
\item[(a)]
The representation $V$ is de Rham,
and there is a canonical isomorphism
of filtered $K$-vector spaces
\[
D_{\dR}(V) \cong H^i_{\dR}(X, K).
\]
\item[(b)]
If $X$ extends to a smooth proper $\frako_K$-scheme, then
$V$ is crystalline.
\item[(c)]
If $X$ extends to a semistable $\frako_K$-scheme, then
$V$ is semistable.
\end{enumerate}
\end{theorem}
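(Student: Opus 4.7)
\medskip

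\noindent\textbf{Proof proposal.}
The plan is to build, for each period ring $\bB \in \{\bB_{\dR}, \bB_{\crys}, \bB_{\st}\}$, a natural ``comparison map'' from the $\bB$-linearization of $H^i_{\et}(X_{\barQQ_p}, \QQ_p)$ to a de Rham / crystalline / log-crystalline cohomology group tensored with $\bB$, and then to prove this map is a $G_K$-equivariant isomorphism respecting all available extra structure (filtration, Frobenius, monodromy). Given such an isomorphism, $\bB$-admissibility is immediate by passing to $G_K$-invariants and using regularity of $\bB$, and the identification of $D_{\dR}(V)$ with $H^i_{\dR}(X,K)$ drops out from part (a).

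My preferred route is via Faltings' almost \'etale approach. First I would choose a proper model or, in cases (b) and (c), a smooth (resp.\ semistable) $\frako_K$-model $\mathcal{X}$ of $X$. On an appropriate site over $\mathcal{X}$ (in the semistable case, equipped with log structure coming from the special fiber) one sheafifies Fontaine's period rings, producing sheaves $\mathcal{B}_{\dR}^+$, $\mathcal{B}_{\crys}$, $\mathcal{B}_{\st}$ whose global sections over affinoid charts can be controlled by computing continuous Galois cohomology of small (``Faltings-small'') affines. The almost purity theorem then identifies $R\Gamma_{\et}(X_{\barQQ_p}, \QQ_p) \otimes \bB$ with the cohomology of $\mathcal{X}$ in the appropriate sheaf, while Poincar\'e-type lemmas for these sheaves identify that cohomology in turn with $H^i_{\dR}(X,K) \otimes \bB$, $H^i_{\crys}(\mathcal{X}_k/W(k)) \otimes \bB$, or the analogous log-crystalline group. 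Running through the construction in the three settings yields the three claims in parallel, the refinements in (b) and (c) coming from the Frobenius and monodromy carried by the sheaves $\mathcal{B}_{\crys}$ and $\mathcal{B}_{\st}$.

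To organize the argument I would first treat (a) in the good-reduction case to isolate the analytic input, then bootstrap to the general smooth proper case by de Jong's alterations (which reduce to a semistable model after a finite extension of $K$, using that being de Rham is insensitive to finite restriction); (c) then subsumes (a) in that setting, and (b) is the further specialization where the monodromy operator $N$ on $D_{\st}(V)$ vanishes, which one checks using the smoothness of the model. An alternative that I might use for (b) and (c) directly is Fontaine--Messing--Tsuji syntomic cohomology, which compares \'etale cohomology with coefficients in $\ZZ/p^n$ to a syntomic sheaf built from the de Rham--Witt complex; this avoids almost \'etale methods at the cost of more intricate integral computations.

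The main obstacle, in either incarnation, is showing that the comparison map is actually an \emph{isomorphism}. Constructing it and checking compatibilities is formal; injectivity and, especially, surjectivity require genuine input, either Faltings' almost purity theorem (which controls the failure of \'etale cohomology of the generic fiber to see the full special-fiber geometry and makes local computations of Galois cohomology feasible) or Tsuji's computation of $p$-adic nearby cycles in terms of the log-syntomic complex. Everything else in the proof is assembly: once the local comparison is known on affines of the model, sheafification, descent, and the regularity properties of $\bB_{\dR}$, $\bB_{\crys}$, $\bB_{\st}$ deliver the global statements (a)--(c).
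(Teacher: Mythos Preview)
The paper does not actually prove this theorem; it is stated as background, with the attribution ``conjectured by Fontaine and Jannsen, and proved by Fontaine--Messing, Faltings, Tsuji, et al.'' So there is no ``paper's own proof'' to compare against: the result is quoted as a deep theorem from the literature, and the remainder of the paper takes it as input.

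Your sketch is a reasonable high-level outline of how the cited authors establish the result, and the two main strategies you name (Faltings' almost \'etale extensions / almost purity, and the Fontaine--Messing--Tsuji syntomic approach) are indeed the standard routes. A few cautions if you intend this as more than a pointer to the literature: the reduction of the general smooth proper case of (a) to the semistable case via de Jong's alterations is not quite as clean as you suggest, since alterations only produce a semistable model after a generically finite (not necessarily \'etale) cover, and one must control how \'etale cohomology behaves under such a cover (this is where one uses a direct-summand argument via a trace, together with potential semistability descending to de Rham); and the claim that (b) follows from (c) by ``checking $N=0$ using smoothness of the model'' hides real content, since one must identify the monodromy on $D_{\st}$ with the log-crystalline monodromy and then invoke triviality of the latter for a smooth model. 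None of this is wrong, but each step is a theorem in its own right, which is presumably why the paper simply cites the result rather than attempting a proof.
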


In line with the previous statement, the following result was
conjectured by Fontaine; its proof combines a result of
Berger with a theorem concerning
$p$-adic differential equations due to Andr\'e, Mebkhout, and
the author.
\begin{theorem} \label{T:pst}
Let $V$ be a de Rham representation of $G_K$. Then $V$ is
potentially semistable; that is, there exists a finite
extension $L$ of $K$ such that the restriction of $V$ to $G_L$ is 
semistable.
\end{theorem}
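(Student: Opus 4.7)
The plan is to translate the problem into one about $p$-adic differential equations over the Robba ring $\calR_K$, and then apply the $p$-adic local monodromy theorem. First I would invoke the theory of $(\varphi, \Gamma)$-modules, which gives an equivalence of categories between $p$-adic representations of $G_K$ and \'etale $(\varphi, \Gamma)$-modules over a certain field $\bB_K$. By the overconvergence theorem of Cherbonnier--Colmez, followed by base change to the Robba ring, this extends to a fully faithful embedding $V \mapsto D^{\dagger}_{\rig}(V)$ into $(\varphi, \Gamma)$-modules over $\calR_K$. Differentiating the action of $\Gamma$ (which is a $p$-adic Lie group) produces a connection on $D^{\dagger}_{\rig}(V)$, so one may regard the latter as a $(\varphi, \nabla)$-module over $\calR_K$.

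The key input from Berger is a dictionary between the admissibility conditions on $V$ and algebraic structures on $D^{\dagger}_{\rig}(V)$. Specifically, to any de Rham representation $V$, Berger attaches a canonical $p$-adic differential equation $N_{\dR}(V)$ over $\calR_K$ --- a free $\calR_K$-module of rank $\dim_{\QQ_p} V$ equipped with commuting Frobenius and connection operators --- in such a way that $V$ is semistable if and only if $N_{\dR}(V)$ is unipotent, meaning a successive extension of rank-one pieces with trivial connection. Moreover, the construction commutes with restriction along finite extensions, so $N_{\dR}(V|_{G_L}) \cong N_{\dR}(V) \otimes_{\calR_K} \calR_L$; in particular, $V|_{G_L}$ is semistable precisely when $N_{\dR}(V)$ becomes unipotent after base change to $\calR_L$.

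The final step is to apply the $p$-adic local monodromy theorem of Andr\'e, Mebkhout, and the author: every $(\varphi, \nabla)$-module over $\calR_K$ is quasi-unipotent, i.e., becomes unipotent after base change to $\calR_L$ for some finite extension $L/K$. Specializing to $N_{\dR}(V)$ and translating back through Berger's dictionary produces an $L$ over which $V|_{G_L}$ is semistable, which is exactly the conclusion of the theorem.

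The hard part will be the construction of $N_{\dR}(V)$ and the verification that it encodes semistability of $V$ via its unipotence. This requires a delicate comparison between the Fontaine period rings $\bB_{\dR}$ and $\bB_{\st}$ and certain localizations of the Robba ring at canonical Frobenius-invariant elements (notably Fontaine's element $t$), and constitutes the technical heart of Berger's contribution. The local monodromy theorem itself is of course also deep, but once $N_{\dR}(V)$ has been built with its full $(\varphi, \nabla)$-structure, feeding it into that theorem is essentially formal.
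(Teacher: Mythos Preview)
Your outline is correct and matches precisely the strategy the paper attributes to Berger together with Andr\'e, Mebkhout, and the author: pass from $V$ to its $(\varphi,\Gamma)$-module over the Robba ring via Cherbonnier--Colmez, use Berger's construction of the associated $p$-adic differential equation $N_{\dR}(V)$ and his criterion that semistability of $V$ corresponds to unipotence of $N_{\dR}(V)$, and then invoke the $p$-adic local monodromy theorem to obtain quasi-unipotence. The paper itself does not spell out a proof beyond naming these ingredients, so your proposal is in fact more detailed than what appears there.
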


\section{Period rings}

We now describe some of the key rings in Fontaine's theory. 
(We recommend \cite{berger-dwork} for a more detailed introduction.)
Keep in mind
that everything we write down will carry an
action of $G_{\QQ_p}$, so we won't say this explicitly each time.

Write $\frako$ for $\frako_{\CC_p}$.
The ring $\frako/p\frako$
admits a $p$-power Frobenius map; let $\bt{E}^+$ be the inverse limit
of $\frako/p \frako$ under Frobenius. That is, an element of 
$\bt{E}^+$ is a sequence
$\barx = (\barx_n)_{n=0}^\infty$ of elements of $\frako/p\frako$ with 
$\barx_n = \barx_{n+1}^p$.
By construction, Frobenius is a bijection on $\bt{E}^+$.

If $\barx \in \bt{E}^+$ is nonzero, then $p^n v_{\CC_p}(\barx_n)$
is constant for those $n$ for which $\barx_n \neq 0$. The resulting function
\[
v_E(\barx) = \lim_{n \to \infty} p^n v_{\CC_p}(\barx_n)
\]
is a valuation, and $\bt{E}^+$ is a valuation ring for this valuation;
in particular, $\bt{E}^+$ is an integral domain (even though
$\frako/p\frako$ is nonreduced). It will be convenient to fix the choice
of an element $\epsilon \in \bt{E}^+$ such that $\epsilon_n$ is a primitive
$p^n$-th root of unity; then $\bt{E}^+$ is the valuation ring in
a completed algebraic closure of $\FF_p((\epsilon - 1))$.

Given $\barx \in \bt{E}^+$, let $x_n \in \frako$ be any lift of $\barx_n$.
The elementary fact that
\[
a \equiv b \pmod{p^m} \implies a^p \equiv b^p \pmod{p^{m+1}}
\]
implies first that the sequence $(x_n^{p^n})_{n=0}^\infty$
is $p$-adically convergent,
and second that the limit is independent of the choice of the $x_n$.
We call this limit $\bartheta(\barx)$; the resulting function
$\bartheta: \bt{E}^+ \to \frako$ is not a homomorphism,
but it is multiplicative.
In particular, $\bartheta(\epsilon) = 1$.

Let $\bt{A}^+ = W(\bt{E}^+)$ be the ring of $p$-typical Witt vectors with
coefficients in $\bt{E}^+$. Although this ring is non-noetherian (because
the valuation on $\bt{E}^+$ is not discrete), one should still think of it
as a ``two-dimensional'' local ring, since $\bt{A}^+/p\bt{A}^+ \cong \bt{E}^+$.
By properties of Witt vectors, the multiplicative map $\bartheta$
lifts to a ring homomorphism
$\theta: \bt{A}^+ \to \frako$ taking a Teichm\"uller lift
$[\barx]$ to $\bartheta(\barx)$. (The Teichm\"uller lift $[\barx]$ is the unique
lift of $\barx$ which has $p^n$-th roots for all $n \geq 0$.)
Also, there is a Frobenius map $\phi: \bt{A}^+ \to \bt{A}^+$ lifting 
the usual Frobenius on $\bt{E}^+$.
Put $\bt{B}^+ = \bt{A}^+[\frac 1p]$; then $\theta$ extends to a map
$\theta: \bt{B}^+ \to \CC_p$, and $\phi$ also extends.

With this, we are ready to make Fontaine's rings. 
It can be shown that in $\bt{B}^+$,
$\ker(\theta)$ is principal and generated by 
$([\epsilon] - 1)/([\epsilon]^{1/p} - 1)$.
Also, $\bt{A}^+$ is complete for the $\ker(\theta)$-adic topology.
However, $\bt{B}^+$ is not (despite being $p$-adically complete); 
denote its completion by $\bB_{\dR}^+$.
This ring does not admit an extension of $\phi$, because $\phi$ is not
continuous for the $\ker(\theta)$-adic topology. Instead,
choose $\tilde{p} \in \bt{E}^+$ with $\bartheta(\tilde{p}) = p$,
form the $p$-adically completed polynomial ring $\bt{B}^+\langle x \rangle$,
and let $\bB^+_{\max} \subset \bB_{\dR}^+$ be the image of the
map $\bt{B}^+\langle x \rangle \to \bB_{\dR}^+$ under $x \mapsto [\tilde{p}]/p$
(that makes sense because $[\tilde{p}]/p - 1 \in \ker(\theta)$).
Then $\phi$ does extend to $\bB^+_{\max}$, and we can define
\[
\bt{B}_{\rig}^+ = \bigcap_{n \geq 0} \phi^n(\bB^+_{\max}).
\]
(Note for experts: we will substitute $\bB^+_{\max}$ for $\bB^+_{\crys}$, 
and this is okay because they give the same notion of admissibility.)
Put $\bB^+_{\st} = \bB^+_{\max}[\log [\tilde{p}]]$; this logarithm is only
a formal symbol, but there is a logical way to extend $\phi$ to it,
namely $\phi(\log[\tilde{p}]) = p \log [\tilde{p}]$.
To embed $\bB^+_{\st}$ into $\bB^+_{\dR}$, one must choose a branch of the
$p$-adic logarithm (i.e., a value of $\log(p)$),
and then map
\[
\log [\tilde{p}] \mapsto \log(p) - \sum_{i=1}^\infty \frac{(1-[\tilde{p}]/p)^{i}}{i}.
\]
One defines a \emph{monodromy operator} $N = d/d(\log [\tilde{p}])$ on
$\bB^+_{\st}$, satisfying
$N\phi = p \phi N$.

Finally, the non-plus variants $\bB_{\max}, \bB_{\st}, \bB_{\dR}$
are obtained from their plus counterparts by adjoining
$1/t$ for
\[
t = \log([\epsilon]) 
= \sum_{i=1}^\infty -\frac{(1-[\epsilon])^i}{i} \in \ker(\theta).
\]
Note that $\bB_{\dR}$ is a complete discrete valuation field with uniformizer $t$; it
is natural to equip it with the decreasing filtration
\[
\Fil^i \bB_{\dR} = t^i \bB_{\dR}^+.
\]
Also, the ring $\bB_e = \bB_{\max}^{\phi=1}$ sits in the so-called
\emph{fundamental exact sequence}:
\[
0 \to \QQ_p \to \bB_{\max}^{\phi=1} \to \bB_{\dR}/\bB_{\dR}^+ \to 0.
\]
This is loosely analogous to the exact sequence
\[
0 \to k \to k[u] \to k((t))/k \llbracket t \rrbracket \to 0
\]
for $u \in t^{-1} k \llbracket t \rrbracket \setminus k \llbracket t 
\rrbracket$.

\section{Permuting the steps}

We note in passing that it is possible to permute the steps of the
above constructions, with the aim of postponing the choice of the prime $p$.
This might help one present the theory so that the
infinite place becomes a valid choice of prime, under which one should recover
ordinary Hodge theory. So far, this is more speculation than reality; we report
here only the first steps, leaving future discussion to another occasion.

For starters, 
$\bt{A}^+$ can be constructed as the inverse limit of $W(\frako)$
under the Witt vector Frobenius map, and $\theta$ can be recovered as the composition
\[
\bt{A}^+ \to W(\frako) \to \frako
\]
of the first projection of the inverse limit with the residue map on Witt vectors.
(This observation was made by Lars Hesselholt and verified by Chris Davis.)
This still involves the use of $p$ in constructing $\frako = \frako_{\CC_p}$;
that can be postponed as follows.
Let $\barZZ$ be the ring of algebraic integers.
Let $R$ be the inverse limit of $W(\barZZ)$ under the Witt vector Frobenius;
then we get a map $\theta: R \to W(\barZZ) \to \barZZ$ by composing as above. 
Choose a prime $\frakp$ of $\barZZ$ above $p$ (thus determining a map
$\barQQ \to \barQQ_p$, up to an automorphism of $\barQQ_p$),
and put $(\tilde{p}) = \barZZ \cap p \barZZ_{\frakp}$; 
in other words, $(\tilde{p})$ are the algebraic integers
whose $\frakp$-adic valuation 
is at least that of $p$ itself. Then $\bt{A}^+$ is the 
completion of $R$ for the $\theta^{-1}(\tilde{p})$-adic topology.

We are still using $p$ via the definition of $W$ using $p$-typical Witt
vectors. One can
postpone the choice of $p$ further by using the big Witt vectors,
taking the inverse limit under all of the Frobenius maps, instead of the
$p$-typical Witt vectors. In this case, the
completion for the $\theta^{-1}(\tilde{p})$-adic topology splits into 
copies of $\bt{A}^+$ indexed by positive integers coprime to $p$, which
are shifted around by the prime-to-$p$ Frobenius maps.

\section{$B$-pairs as $p$-adic Hodge structures}

One of the principal points of departure for ordinary Hodge theory is
the comparison isomorphism
\[
H_{\Betti}^\cdot(X, \ZZ) \otimes_{\ZZ} \CC \cong H_{\dR}^\cdot(X, \CC).
\]
One then defines a \emph{Hodge structure} as a $\CC$-vector space
carrying the extra structures 
brought to the comparison isomorphism by the extra structures on
both sides: the integral structure on the left side, and the Hodge 
decomposition
on the right side.

The notion of a $B$-pair, recently introduced by Berger \cite{berger-bpairs},
performs an analogous function for the comparison isomorphism
\[
H^i_{\et}(X_{\barQQ_p}, \QQ_p) \otimes_{\QQ_p} \bB_{\dR}
\cong H^i_{\dR}(X, K) \otimes_K \bB_{\dR},
\]
where the extra structures are a Hodge filtration on the right side, 
and the compatible Galois actions on both sides.

A \emph{$B$-pair} over $K$ 
is a pair $(W_e, W_{\dR}^+)$, where $W_e$
is a finite free $\bB_e$-module equipped with a continuous semilinear
$G_K$-action, and $W_{\dR}^+$ is a finite
free $\bB_{\dR}^+$-submodule
of $W_{\dR} = W_e \otimes_{\bB_e} \bB_{\dR}$ stable under $G_K$ such that
$W_{\dR}^+ \otimes_{\bB_{\dR}^+} \bB_{\dR} \to W_{\dR}$ is an isomorphism.
(Note that $\bB_e$ is a B\'ezout domain,
i.e., an integral domain in which finitely generated ideals are
principal \cite[Proposition~2.2.3]{berger-bpairs}.)
The \emph{rank} of a $B$-pair $W$ is the common quantity
$\rank_{\bB_e} W_e = \rank_{\bB_{\dR}^+} W_{\dR}^+$.

\begin{lemma} \label{L:rank 1}
Every $B$-pair $W$ of rank $1$ over $K$ can be uniquely written as
$(\bB_e(\delta), t^i \bB_{\dR}^+ (\delta))$ 
for some character $\delta: G_K \to \QQ_p^\times$
and some $i \in \ZZ$.
\end{lemma}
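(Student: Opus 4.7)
The plan is to reduce the problem to a one-dimensional cocycle computation, exploiting that $\bB_e$ and $\bB_{\dR}^+$ meet inside $\bB_{\dR}$ in precisely $\QQ_p$. First I would pick any $\bB_e$-basis $e$ of the rank-one module $W_e$, encoding the continuous semilinear action as a $1$-cocycle $c: G_K \to \bB_e^\times$ via $g(e) = c(g)e$. Since $\bB_{\dR}$ is a discrete valuation field with uniformizer $t$ and $W_{\dR} = \bB_{\dR}\cdot e$, there is a unique $i \in \ZZ$ such that $W_{\dR}^+ = t^i \bB_{\dR}^+ \cdot e$ (the $(\bB_{\dR}^+)^\times$-ambiguity in the choice of $\bB_{\dR}^+$-generator of $W_{\dR}^+$ is absorbed). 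Combining Galois-stability of $W_{\dR}^+$ with the identity $g(t) = \chi(g)t$ ($\chi$ the cyclotomic character) translates immediately to the constraint $\chi(g)^i c(g) \in (\bB_{\dR}^+)^\times$ for every $g \in G_K$.

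The crux is the identification $\bB_e^\times \cap (\bB_{\dR}^+)^\times = \QQ_p^\times$. This follows directly from the fundamental exact sequence
\[
0 \to \QQ_p \to \bB_e \to \bB_{\dR}/\bB_{\dR}^+ \to 0,
\]
which exhibits $\QQ_p$ as the intersection $\bB_e \cap \bB_{\dR}^+$ inside $\bB_{\dR}$. Consequently, setting $\eta(g) := \chi(g)^i c(g)$ gives a continuous map $\eta: G_K \to \QQ_p^\times$; the cocycle condition on $c$, together with the triviality of the $G_K$-action on $\QQ_p$, then forces $\eta$ to be a continuous character. Putting $\delta = \chi^{-i} \eta$, I would verify that the assignment $1_\delta \mapsto e$ extends to an isomorphism of $B$-pairs $(\bB_e(\delta), t^i \bB_{\dR}^+(\delta)) \xrightarrow{\sim} (W_e, W_{\dR}^+)$, which is automatic once one checks $g(e) = \delta(g)e$.

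For uniqueness, any isomorphism between two such presentations is given by multiplication by some $u \in \bB_e^\times$ carrying $t^{i_1} \bB_{\dR}^+$ to $t^{i_2} \bB_{\dR}^+$ inside $\bB_{\dR}$, forcing $u \in t^{i_2 - i_1}(\bB_{\dR}^+)^\times$. The same intersection identity, applied to $\bB_e \cap t^k \bB_{\dR}^+$ for $k \neq 0$, shows that $\bB_e^\times$ contains no element of nonzero $t$-adic valuation; hence $i_1 = i_2$, whence $u \in \QQ_p^\times$ and therefore $\delta_1 = \delta_2$.

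The only substantive input is the intersection identity $\bB_e \cap \bB_{\dR}^+ = \QQ_p$, read directly off the fundamental exact sequence. Everything else is formal cocycle bookkeeping, and the main place to be careful is keeping straight which character records the action on the $\bB_e$-basis of $W_e$ versus the $\bB_{\dR}^+$-generator of $W_{\dR}^+$—the two differ by a twist by $\chi^i$.
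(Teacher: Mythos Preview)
Your argument is correct and is essentially the standard one; the paper itself does not give a proof but simply cites \cite[Lemme~3.1.4]{berger-bpairs}, where Berger's argument proceeds along the same lines (use the fundamental exact sequence to identify $\bB_e \cap \bB_{\dR}^+ = \QQ_p$, hence $\bB_e^\times \cap (\bB_{\dR}^+)^\times = \QQ_p^\times$, and deduce that the defining cocycle is already $\QQ_p^\times$-valued). One cosmetic remark: your detour through $\eta$ is unnecessary, since $\delta = \chi^{-i}\eta = c$---what you have actually shown is directly that the cocycle $c$ lands in $\QQ_p^\times$, so $c$ itself is the sought character $\delta$.
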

\begin{proof}
See \cite[Lemme~3.1.4]{berger-bpairs}.
\end{proof}
We refer to the integer $i$ as the \emph{degree} of $W$. If $W$ has rank
$d>1$, we define the degree of $W$ as the degree of the determinant
$\det W = \wedge^d W$. We refer to the quotient $\mu(W) = (\deg W)/(\rank W)$
as the \emph{average 
slope} of $W$. (This is meant to bring to mind the theory of
vector bundles on curves; see the next section for an extension of this 
analogy.)

There is a functor
\[
V \mapsto W(V) = (V \otimes_{\QQ_p} \bB_e, V \otimes_{\QQ_p} \bB_{\dR}^+),
\]
from $p$-adic representations to $B$-pairs; it has the one-sided 
inverse
\[
W \mapsto V(W) = W_e \cap W_{\dR}^+,
\]
and so is fully faithful. However, not every $B$-pair corresponds to
a representation; for instance, in Lemma~\ref{L:rank 1}, only those
$B$-pairs with $i=0$ correspond to representations. 
We will return to the
structure of a general $B$-pair in the next two sections.

One can generalize many definitions and results from $p$-adic representations
to $B$-pairs. For instance, we set
\begin{align*}
D_{\max}(W) &= (W_e \otimes_{\bB_e} \bB_{\max})^{G_K} \\
D_{\st}(W) &= (W_e \otimes_{\bB_e} \bB_{\st})^{G_K} \\
D_{\dR}(W) &= (W_e \otimes_{\bB_e} \bB_{\dR})^{G_K};
\end{align*}
for $* \in \{\max, \st, \dR\}$, the map
\begin{equation} \label{eq:admissible2}
D_*(W) \otimes_{\bB_*^{G_K}} \bB_* \to V \otimes_{\QQ_p} \bB_*
\end{equation}
is injective, and we say that $W$ is \emph{$*$-admissible} 
if \eqref{eq:admissible2} is a bijection. In these terms, we
have the following extension of Theorem~\ref{T:pst}, again
due to Berger \cite{berger-inv}. (The closest analogue of this result
in ordinary Hodge theory is a theorem of Borel \cite[Lemma~4.5]{schmid}, 
that any polarized
variation of rational Hodge structures is quasi-unipotent.)

\begin{theorem} \label{T:pst2}
Let $W$ be a de Rham $B$-pair over $K$. Then there exists a finite
extension $L$ of $K$ such that the restriction of $W$ to $L$ is 
semistable.
\end{theorem}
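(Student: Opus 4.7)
My plan is to deduce Theorem~\ref{T:pst2} from Theorem~\ref{T:pst} (or rather, from the underlying $p$-adic monodromy theorem of Andr\'e--Mebkhout--Kedlaya) by passing through the category of $(\phi,\Gamma)$-modules over the Robba ring. The starting point is Berger's equivalence between the category of $B$-pairs over $K$ and the category of $(\phi,\Gamma)$-modules over $\calR_K = \bt{B}_{\rig,K}^\dagger$. Granting this equivalence, it suffices to prove the corresponding monodromy statement for $(\phi,\Gamma)$-modules: any de Rham $(\phi,\Gamma)$-module over $\calR_K$ becomes semistable after restriction to $\calR_L$ for some finite extension $L/K$.

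First I would make explicit the passage $W \mapsto D_{\rig}(W)$ from a $B$-pair to its associated $(\phi,\Gamma)$-module, check that this passage is compatible with restriction to open subgroups of $G_K$ (i.e.\ with replacing $K$ by a finite extension $L$), and verify that the functors $D_{\dR}$ and $D_{\st}$ on $B$-pairs agree with the corresponding Berger functors on $(\phi,\Gamma)$-modules. In particular, $W$ is de Rham (resp.\ semistable) over $K$ if and only if $D_{\rig}(W)$ is de Rham (resp.\ semistable) in the sense of Berger. These compatibilities are not new; they are built into the construction in \cite{berger-bpairs}, so the proof essentially reduces to citing them.

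Once we have reduced to the $(\phi,\Gamma)$-module setting, the core input is the $p$-adic local monodromy theorem: a de Rham $(\phi,\Gamma)$-module $M$ over $\calR_K$ is, after restriction to some finite extension, quasi-unipotent, i.e.\ semistable. Indeed, being de Rham forces $M$ to have the structure of a $p$-adic differential equation with Frobenius structure, and this is exactly the setting in which the $p$-adic local monodromy theorem applies, producing $L/K$ finite such that $M \otimes_{\calR_K} \calR_L$ admits a full basis of horizontal sections in $\bB_{\st}$. Translating this back through the equivalence gives the desired statement for $W$.

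The main obstacle, as in Berger's original proof of Theorem~\ref{T:pst}, is the $p$-adic monodromy theorem itself — everything else is formalism. The only additional subtlety specific to the $B$-pair setting is making sure that the definition of de Rham/semistable used on the $B$-pair side (via $W_e$, $W_{\dR}^+$, and the functors $D_{\dR}, D_{\st}$ defined just before the theorem) really does match the definition on the $(\phi,\Gamma)$-module side under Berger's functor; once that compatibility is in hand, the potential semistability result transfers essentially verbatim from $p$-adic representations to $B$-pairs.
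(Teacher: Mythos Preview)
Your proposal is correct and matches the approach the paper points to. The paper itself does not give a proof of Theorem~\ref{T:pst2}; it simply attributes the result to Berger \cite{berger-inv} as an extension of Theorem~\ref{T:pst}, and the machinery you invoke---Berger's equivalence with $(\phi,\Gamma)$-modules over the Robba ring (Theorem~\ref{T:equiv}), the compatibility of the de Rham and semistable conditions across that equivalence, and the $p$-adic local monodromy theorem of Andr\'e--Mebkhout--Kedlaya---is exactly the route by which that extension is obtained.
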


Moreover, the $B$-pairs which are already crystalline over $K$ can be
described more explicitly; they form a category equivalent to
the category of filtered $\phi$-modules over $K$. (Such an object is
a finite dimensional $K_0$-vector space $V$, for $K_0$ the maximal unramified
extension of $K$, equipped with a semilinear
$\phi$-action and an exhaustive decreasing filtration
$\Fil^i V_K$ of $V_K = V \otimes_{K_0} K$.)

\section{Sloped representations}

For $h$ a positive integer and $a \in \ZZ$ coprime to $h$, 
we define an \emph{$(a/h)$-representation} as a finite-dimensional
$\QQ_{p^h}$-vector space $V_{a,h}$ equipped with a semilinear $G_K$-action
and a semilinear Frobenius action $\phi$ commuting with the $G_K$-action,
satisfying $\phi^h = p^a$. For instance, we may view a $p$-adic representation
as a 0-representation by taking $\phi = \id$.

We say that the $B$-pair $W$ is \emph{isoclinic of slope $a/h$} if
it occurs in the essential image of $W_{a,h}$; we say that $W$ is
\emph{\'etale} if it is isoclinic of slope 0.
\begin{theorem}
The functor
\[
V_{a,h} \mapsto W_{a,h}(V_{a,h}) = ((\bB_{\max} \otimes_{\QQ_{p^h}} V_{a,h})^{\phi=1},
\bB_{\dR}^+ \otimes_{\QQ_{p^h}} V_{a,h})
\]
from $(a/h)$-representations to $B$-pairs of slope $a/h$ is fully faithful.
(Here $\QQ_{p^h}$ denotes the unramified extension of $\QQ_p$ with
residue field $\FF_{p^h}$.)
\end{theorem}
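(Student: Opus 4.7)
The strategy is to construct a left inverse to $W_{a,h}$ on its essential image, which will force full faithfulness. Given a $B$-pair $W$ isoclinic of slope $a/h$, form the $\bB_{\max}$-module
\[
M(W) := W_e \otimes_{\bB_e} \bB_{\max},
\]
which inherits both a $\phi$-action (from $\bB_{\max}$, with $W_e$ treated as $\phi$-fixed) and the induced $G_K$-action. The aim is to show that in the essential image of $W_{a,h}$, this module has the form $\bB_{\max} \otimes_{\QQ_{p^h}} V_{a,h}$ for some $(a/h)$-representation $V_{a,h}$, so that the assignment $V(W) := V_{a,h}$ defines an inverse functor.

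The central step is to verify that when $W = W_{a,h}(V_{a,h})$, the natural multiplication map
\[
W_e \otimes_{\bB_e} \bB_{\max} \longrightarrow \bB_{\max} \otimes_{\QQ_{p^h}} V_{a,h}, \qquad w \otimes b \longmapsto b w,
\]
is an isomorphism of $\phi$-equivariant $G_K$-modules. I expect this descent to be the main obstacle. In the trivial case $(a,h) = (0,1)$ the map is automatic, since then $W_e = \bB_e \otimes_{\QQ_p} V_{a,h}$ by construction; but for nonzero slope, $W_e$ is defined only implicitly as the $\phi$-fixed points of $\bB_{\max} \otimes V_{a,h}$, so one needs the nontrivial statement that these $\phi$-fixed vectors generate the whole ambient tensor product over $\bB_{\max}$. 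This is a Dieudonn\'e--Manin-type fact for $\phi$-modules over $\bB_{\max}$ -- any $\phi$-module that is isoclinic of slope $a/h$ is spanned over $\bB_{\max}$, after an appropriate twist, by its $\bB_e$-module of $\phi$-invariants -- available in this setting through work of Fontaine and Colmez.

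Granting the descent isomorphism, recovering $V_{a,h}$ is routine: it is the image of $v \mapsto 1 \otimes v$ inside $\bB_{\max} \otimes_{\QQ_{p^h}} V_{a,h}$, characterized as the unique $\QQ_{p^h}$-subspace on which the tensor-product Frobenius restricts to the original $\sigma$-semilinear endomorphism satisfying $\phi^h = p^a$. The construction is functorial in $W$, so one obtains $V \circ W_{a,h} \cong \id$ on $(a/h)$-representations, and hence full faithfulness of $W_{a,h}$. This proceeds in direct analogy with the way the \'etale case $(a,h)=(0,1)$ is handled by $V(W) = W_e \cap W_{\dR}^+$ together with the fundamental exact sequence of the preceding section.
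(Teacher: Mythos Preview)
Your proposal is correct and takes the same approach as the paper: the paper's proof is simply a reference to Berger's Th\'eor\`eme~4.3.3 for the construction of a one-sided inverse, and your sketch of that inverse---via the descent isomorphism $W_e \otimes_{\bB_e} \bB_{\max} \cong \bB_{\max} \otimes_{\QQ_{p^h}} V_{a,h}$ coming from the Dieudonn\'e--Manin classification for $\phi$-modules over $\bB_{\max}$---is precisely the content of that cited argument.
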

\begin{proof}
See \cite[Th\'eor\`eme~4.3.3]{berger-bpairs}
 for the construction of a one-sided inverse.
\end{proof}

Using the equivalence of categories between $B$-pairs and 
$(\phi, \Gamma)$-modules (Theorem~\ref{T:equiv}), one deduces
the following properties of isoclinic $B$-pairs.

\begin{lemma} \label{L:maps}
\begin{enumerate}
\item[(a)]
If $0 \to W_1 \to W \to W_2 \to 0$ is exact and any two of
$W_1, W_2, W$ are isoclinic of slope $s$, then so is the third.
\item[(b)]
Let $W_1, W_2$ be isoclinic $B$-pairs of slopes $s_1 < s_2$.
Then $\Hom(W_1, W_2) = 0$.
\item[(c)]
Let $W_1, W_2$ be isoclinic $B$-pairs of slopes $s_1, s_2$.
Then $W_1 \otimes W_2$ is isoclinic of slope $s_1 + s_2$.
\end{enumerate}
\end{lemma}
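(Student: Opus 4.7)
The plan is to transport each statement via Theorem~\ref{T:equiv} to the category of $(\phi, \Gamma)$-modules over the Robba ring, and then to forget the $\Gamma$-action and invoke the slope theory for $\phi$-modules. The prerequisite step is to verify that a $B$-pair $W$ is isoclinic of slope $s$ in the sense defined above (i.e., lies in the essential image of $W_{a,h}$) if and only if the underlying $\phi$-module of its associated $(\phi, \Gamma)$-module is pure of slope $s$ in the Dieudonn\'e--Manin sense. This compatibility is what lets the formal properties of slope filtrations transfer cleanly to $B$-pairs.

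Given this matching, (a) reduces to additivity of rank and degree in a short exact sequence, which forces the slope of the third term to equal $s$, together with the standard fact that pure-slope-$s$ $\phi$-modules form a Serre subcategory: extensions of pure-slope-$s$ objects are pure of slope $s$, and sub-objects and quotients of pure-slope-$s$ objects are pure of slope $s$. For (b), a nonzero morphism $f \colon W_1 \to W_2$ would have an image that is simultaneously a quotient of $W_1$ (hence pure of slope $s_1$) and a sub-object of $W_2$ (hence pure of slope $s_2$); with $s_1 \neq s_2$ this is impossible, so the argument in fact handles both directions, not only $s_1 < s_2$. For (c), the tensor product of isoclinic $\phi$-modules of slopes $s_1, s_2$ is isoclinic of slope $s_1 + s_2$; this is classical Dieudonn\'e--Manin, and can be checked directly on standard models via the formula $\phi^{h_1 h_2} = p^{a_1 h_2 + a_2 h_1}$ acting on $V_{a_1,h_1} \otimes V_{a_2,h_2}$, then extended to the general case by the slope-decomposition theorem.

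The main obstacle is the preliminary compatibility of the two notions of isoclinicity, since the $B$-pair definition is phrased through $\bB_{\max}$ whereas the slope theory of $\phi$-modules lives over the Robba ring. Once this matching is established (which is implicit in Berger's construction via Theorem~\ref{T:equiv}), the three deductions above are purely formal consequences of the slope-filtration formalism and require no further input about the $G_K$-action.
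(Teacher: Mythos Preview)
Your overall strategy---transport via Theorem~\ref{T:equiv} to $(\phi,\Gamma)$-modules, forget $\Gamma$, and invoke the slope theory for $\phi$-modules over the Robba ring---matches the paper's approach exactly: the sentence preceding the lemma says as much, and the paper's proof is simply a citation to the relevant statements in \cite{kedlaya-rel}.

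However, your specific arguments for (a) and (b) rest on a false premise: isoclinic-of-slope-$s$ objects over the Robba ring do \emph{not} form a Serre subcategory. Subobjects and quotients of an isoclinic object need not be isoclinic of the same slope. Concretely, $tW_0 = (\bB_e, t\bB_{\dR}^+)$ is a rank-$1$ sub-$B$-pair of slope $1$ inside the trivial $B$-pair $W_0 = (\bB_e, \bB_{\dR}^+)$ of slope $0$; this is precisely why Theorem~\ref{T:filtration} produces a \emph{filtration} rather than a direct-sum decomposition, in contrast to classical Dieudonn\'e--Manin over an algebraically closed field, which is what you appear to be importing. The same example refutes your parenthetical claim that (b) holds for $s_1 \neq s_2$ in either direction: the inclusion $tW_0 \hookrightarrow W_0$ is a nonzero morphism from slope $1$ to slope $0$. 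The correct argument for (b) uses only the semistability inequalities---a nonzero image $I$ satisfies $s_2 \leq \mu(I) \leq s_1$, forcing $s_1 \geq s_2$---and the correct proof of (a) genuinely requires that \emph{two} of the three terms are already isoclinic of the common slope, not merely one. Your sketch for (c) is closer to correct, though over the Robba ring this too is a real theorem (Corollary~1.6.4 of \cite{kedlaya-rel}) rather than a formal consequence of the classical decomposition.
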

\begin{proof}
See Proposition~1.5.6, Corollary~1.6.9, and Corollary~1.6.4, respectively,
of \cite{kedlaya-rel}.
\end{proof}

The following is a form of the author's slope filtration theorem
for Frobenius modules over the Robba ring.
\begin{theorem} \label{T:filtration}
Let $W$ be a $B$-pair. Then there is a unique filtration
$0 = W_0 \subset \cdots \subset W_l = W$ of $W$ by $B$-pairs
with the following properties.
\begin{enumerate}
\item[(a)]
For $i=1,\dots,l$, the quotient $W_i/W_{i-1}$ is a $B$-pair
which is isoclinic of slope $s_i$.
\item[(b)]
We have $s_1 < \cdots < s_l$.
\end{enumerate}
\end{theorem}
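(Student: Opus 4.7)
My strategy is to deduce the theorem from the author's slope filtration theorem for $\phi$-modules over the Robba ring, transported via the equivalence between $B$-pairs and $(\phi,\Gamma)$-modules recorded in Theorem~\ref{T:equiv}. First, I would translate $W$ into a $(\phi,\Gamma)$-module $D$ over the Robba ring. Forgetting the $\Gamma$-action, the slope filtration theorem for Frobenius modules yields a unique $\phi$-stable filtration
\[
0 = D_0 \subset D_1 \subset \cdots \subset D_l = D
\]
whose successive quotients are isoclinic of strictly increasing slopes $s_1 < \cdots < s_l$.

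Next, I would verify that each $D_i$ is automatically $\Gamma$-stable. Since the actions of $\phi$ and $\Gamma$ commute, for any $\gamma \in \Gamma$ the twisted chain $(\gamma D_i)$ is again a $\phi$-stable filtration whose successive quotients are isomorphic via $\gamma$ to the $D_i/D_{i-1}$, hence share the same slopes in the same order. The uniqueness clause of the slope filtration theorem forces $\gamma D_i = D_i$. Transferring back through Theorem~\ref{T:equiv} produces a filtration $0 = W_0 \subset \cdots \subset W_l = W$ by $B$-subpairs, and Lemma~\ref{L:maps}(a) certifies that each quotient $W_i/W_{i-1}$ is an isoclinic $B$-pair of slope $s_i$. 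Uniqueness at the level of $B$-pairs transfers by the same device: any competing filtration of $W$ meeting (a) and (b) corresponds under the equivalence to a $\phi$-stable filtration of $D$ with the same slope profile, which must coincide with $(D_i)$.

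The substantive obstacle is the underlying slope filtration theorem for $\phi$-modules over the Robba ring, which I am treating as a black box input; everything else is essentially bookkeeping, with the key move being the $\Gamma$-equivariance argument above, which leans on uniqueness rather than on any fresh analysis. As a sanity check on the rigidity of such filtrations in the $B$-pair formulation, Lemma~\ref{L:maps}(b) rules out any nonzero morphism between isoclinic pieces of distinct slopes, which is precisely what makes the comparison of two candidate filtrations possible inductively on $l$ should one prefer a direct argument bypassing Theorem~\ref{T:equiv}.
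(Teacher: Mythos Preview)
Your proposal is correct and matches the paper's approach: the paper's proof is simply a citation to \cite[Theorem~1.7.1]{kedlaya-rel}, which is the slope filtration theorem on the Robba ring side, so the content is exactly the transport-via-Theorem~\ref{T:equiv} that you sketch, including the standard observation that uniqueness forces $\Gamma$-stability of the $\phi$-filtration. One small quibble: your appeal to Lemma~\ref{L:maps}(a) is misplaced, since that lemma assumes two of the three terms are already isoclinic of the \emph{same} slope; what you actually need is that the equivalence of Theorem~\ref{T:equiv} matches the notion of isoclinic on both sides, which is part of the dictionary set up in \cite{berger-bpairs} and \cite{kedlaya-rel} and is used tacitly in the paper's proof of Lemma~\ref{L:maps} itself.
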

\begin{proof}
See \cite[Theorem~1.7.1]{kedlaya-rel}.
\end{proof}
A corollary is that
a $B$-pair $W$ is ``semistable'' (in the sense of vector bundles,
up to a reversal of the sign convention), meaning that
\[
0 \neq W_1 \subseteq W \implies \mu(W_1) \geq \mu(W),
\]
if and only if $W$ is isoclinic.

\begin{cor} \label{C:extend ineq}
The property of a $B$-pair having all slopes $\geq s$ (resp.\
$\leq s$) is stable under extensions.
\end{cor}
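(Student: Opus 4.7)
The plan is to prove the ``$\geq s$'' case by contradiction using Theorem~\ref{T:filtration} (the slope filtration) and Lemma~\ref{L:maps}(b) (vanishing of $\Hom$ between isoclinic pieces of different slopes); the ``$\leq s$'' case will then follow by the dual argument. So let $0 \to W_1 \to W \to W_2 \to 0$ be exact with $W_1$ and $W_2$ both having all slopes $\geq s$, and suppose for contradiction that $W$ has some slope $< s$.

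The first (and really the only non-formal) step is to upgrade Lemma~\ref{L:maps}(b) to the following statement: if $V$ is isoclinic of slope $t$ and $W'$ is any $B$-pair all of whose slopes are strictly greater than $t$, then $\Hom(V, W') = 0$. I would prove this by induction on the length $m$ of the slope filtration $0 = W'^{(0)} \subset \cdots \subset W'^{(m)} = W'$ given by Theorem~\ref{T:filtration}. The quotient $W'/W'^{(m-1)}$ is isoclinic of slope $t_m > t$, so Lemma~\ref{L:maps}(b) forces the composition $V \to W' \to W'/W'^{(m-1)}$ to vanish; hence any map $V \to W'$ factors through $W'^{(m-1)}$, whose slopes remain strictly greater than $t$, and the inductive hypothesis finishes the job.

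With this in hand, let $W^{(1)}$ be the first step of the slope filtration of $W$; it is isoclinic of some slope $s_1 < s$. Applied to $W' = W_2$, the preliminary observation gives that the composition $W^{(1)} \hookrightarrow W \to W_2$ vanishes, so $W^{(1)}$ embeds into $W_1$. But then applying the same observation to $W' = W_1$ forces $W^{(1)} = 0$, a contradiction. For the ``$\leq s$'' part I would use the dual statement (if $V$ is isoclinic of slope $t$ and $W'$ has all slopes $< t$, then $\Hom(W', V) = 0$, proved by the same induction applied to the top step of the slope filtration) and look instead at the top quotient $W \twoheadrightarrow W/W^{(l-1)}$ of the slope filtration of $W$, showing that both $W_1 \to W/W^{(l-1)}$ and the induced $W_2 \to W/W^{(l-1)}$ vanish, making $W/W^{(l-1)}$ zero. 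The main ``obstacle'' is really just noticing that the preliminary $\Hom$-vanishing statement is precisely the leverage needed; everything else is a short diagram chase.
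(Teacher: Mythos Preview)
Your argument is correct. The paper states this corollary without proof, as an immediate consequence of Theorem~\ref{T:filtration} and Lemma~\ref{L:maps}(b), and what you have written is exactly the natural way to unpack that implication: extend the $\Hom$-vanishing of Lemma~\ref{L:maps}(b) from isoclinic targets to arbitrary targets by induction along the slope filtration, then use it to trap the extremal piece of the slope filtration of $W$ between $W_1$ and $W_2$. There is nothing to compare against, since the paper gives no alternative argument; your write-up is the expected one.

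One minor remark on the ``$\leq s$'' half: rather than redoing the induction in the dual direction, you could also simply observe that passing to duals $0 \to W_2^\vee \to W^\vee \to W_1^\vee \to 0$ converts ``all slopes $\leq s$'' into ``all slopes $\geq -s$'' (slopes negate under duality, by Lemma~\ref{L:maps}(c) applied with $W_2 = W_1^\vee$, or directly from Lemma~\ref{L:rank 1}), so the second case follows formally from the first. But your direct argument is equally valid.
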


\section{Trianguline $B$-pairs}

One might reasonably ask why the study of $B$-pairs should be relevant
to problems only involving $p$-adic representations. One answer is that
there are many $p$-adic representations which become more decomposable
when viewed as $B$-pairs.

Specifically, following Colmez \cite{colmez},
we say that a $B$-pair $W$ is \emph{trianguline} (or
\emph{triangulable}) if $W$ admits a
filtration $0 = W_0 \subset \cdots \subset W_l = W$ in which each
quotient $W_i/W_{i-1}$ is a $B$-pair of rank 1. For instance,
by a theorem of Kisin \cite{kisin-overcon}, if $V$ is the 
two-dimensional representation corresponding to a classical
or overconvergent modular form of finite non-critical slope, then 
$W(V)$ is trianguline. 

As one might expect, the extra structure of a triangulation makes
trianguline $B$-pairs easier to classify.
For example, Colmez has classified all two-dimensional
trianguline $B$-pairs over $\QQ_p$, by calculating $\Ext(W_1, W_2)$ whenever
$W_1, W_2$ are $B$-pairs of rank 1.
He has also shown that their $\mathcal{L}$-invariants (in the sense of
Fontaine-Mazur) can be read off from the triangulation.

A study of the general theory of trianguline $B$-pairs has been initiated by
Bella\"\i che and Chenevier \cite{bellaiche-chenevier}, with the aim
of applying this theory to the study of Selmer groups associated to
Galois representations of dimension greater than 2 (e.g., those arising
from unitary groups). 
It is also hoped that this study will give insight into questions like
properness of the Coleman-Mazur eigencurve.
One feature apparent in the work of Bella\"\i che and
Chenevier, connected to
the results of the next section, is that the
trianguline property of a representation is reflected by the structure of
the corresponding deformation ring.
(A related notion is Pottharst's definition of a 
\emph{triangulordinary} representation \cite{pottharst}, 
which generalizes the notion of
an ordinary representation in a manner useful when considering duality
of Selmer groups.)

\section{Cohomology of $B$-quotients}

In this section, we describe a theorem of Liu \cite{liu} generalizing
Tate's fundamental results on the Galois cohomology of $p$-adic 
representations, and also the Ext group calculations of Colmez mentioned above.
However, to do this properly, we must work with a slightly
larger category than the $B$-pairs, because this category is 
not abelian: it contains kernels but not cokernels.

One can construct
a minimal abelian category containing the $B$-pairs as follows.
Define a \emph{$B$-quotient} as an inclusion $(W_1 \hookrightarrow W_2)$
of $B$-pairs. We put these in a category in which the morphisms
from $(W_1 \hookrightarrow W_2)$ to $(W'_1 \hookrightarrow W'_2)$ consist
of subobjects $X$ of $W_2 \oplus W'_2$ containing $W_1 \oplus W'_1$,
such that the composition $X \to W_2 \oplus W'_2 \to W_2$ is surjective
and the inverse image of $W_1$ is $W_1 \oplus W'_1$.
(One must also define addition and composition of morphisms, but the
reader should have no trouble reconstructing them.)
It can be shown that this yields an abelian category, into which
the $B$-pairs embed by mapping $W$ to $0 \hookrightarrow W$.

For 
$W = (W_1 \hookrightarrow W_2)$ a $B$-quotient, we define the
\emph{rank} of $W$ as $\rank(W_2) - \rank(W_1)$, and we say $W$
is \emph{torsion} if $\rank(W) = 0$.
We also write $\omega = V(\QQ_p(1))$.
One then has the following result, which in particular includes
the Euler characteristic formula and local duality theorems of Tate
\cite{milne}. (However, the proof uses these results as input, and so
does not rederive them independently.)
\begin{theorem}[Liu] \label{T:liu-bpairs}
Define the functor $H^0$ from $B$-quotients to $\QQ_p$-vector spaces
by $H^0(W) = \Hom(W_0, W)$, where $W_0$ is the trivial $B$-pair
$(\bB_e, \bB_{\dR}^+)$. Then $H^0$ extends to a universal
$\delta$-functor $(H^i)_{i=0}^\infty$
with the following properties.
\begin{enumerate}
\item[(a)]
For $W$ a $B$-quotient, $H^i(W)$ is finite dimensional over $\QQ_p$.
\item[(b)]
For $W$ a $B$-quotient, $H^i(W) = 0$ for $i > 2$.
\item[(c)]
For $W$ a torsion $B$-quotient, $H^2(W) =0$.
\item[(d)]
For $W$ a $B$-quotient, $\sum_{i=0}^2 (-1)^i \dim_{\QQ_p} H^i(W) = -[K:\QQ_p] 
\rank W$.
\item[(e)]
For $W$ a $B$-pair, the pairing 
\[
H^i(W) \times H^{2-i}(W^\dual \otimes \omega)
\to H^2(W \otimes W^\dual \otimes \omega)
\to H^2(\omega) \cong \QQ_p
\]
is perfect for $i=0,1,2$.
\end{enumerate}
Moreover, on the subcategory of $p$-adic representations, the $H^i$
are canonically naturally isomorphic to Galois cohomology (in a fashion
compatible with connecting homomorphisms).
\end{theorem}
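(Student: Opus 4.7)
The plan is to compute $H^i$ via a Herr-style three-term complex attached to the equivalent $(\phi,\Gamma)$-module, and then to bootstrap the finer assertions from the classical étale case by dévissage along the slope filtration.

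First I would check that the abelian category of $B$-quotients supports a universal $\delta$-functor extension of $H^0 = \Hom(W_0,-)$, so that the $H^i$ are defined and enjoy the usual long exact sequences. The final compatibility claim — agreement with continuous Galois cohomology on $p$-adic representations — will emerge automatically once the $H^i$ are identified with the cohomology of an explicit three-term complex, because that complex, when restricted to $W = W(V)$, specializes to the Herr complex already known to compute $H^i(G_K, V)$.

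Next, using the equivalence (Theorem~\ref{T:equiv}) between $B$-pairs and $(\phi,\Gamma)$-modules over the Robba ring, I would identify $H^i(W)$ with the cohomology of
\[
0 \to D \xrightarrow{(\phi - 1,\; \gamma - 1)} D \oplus D \xrightarrow{(\gamma - 1)\,\oplus\,(1 - \phi)} D \to 0,
\]
where $D$ is the $(\phi,\Gamma)$-module attached to $W$. This instantly delivers (b). For (a), the base case of étale $W = W(V)$ is Tate's finiteness of $H^*(G_K, V)$; an isoclinic slope-$(a/h)$ piece reduces to the étale case through the twisting functor $W_{a,h}$ and restriction to $\QQ_{p^h}$; and a general $W$ is handled by long exact sequences applied to the slope filtration of Theorem~\ref{T:filtration}. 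Formula (d) is additive in short exact sequences via the long exact sequence, so the same dévissage reduces it via Lemma~\ref{L:rank 1} to the rank-one case $(\bB_e(\delta), t^i \bB_{\dR}^+(\delta))$; there it follows from Tate's local Euler characteristic formula applied to the character $\delta$, the integer $i$ contributing nothing to the alternating sum. Part (c) is the direct observation that for a torsion $B$-quotient $(W_1 \hookrightarrow W_2)$ with $\rank W_1 = \rank W_2$, the cokernel term in the Herr complex is forced to be surjective, so the top cohomology vanishes.

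The serious obstacle is (e). The pairing is formal, built from the cup product on the $\delta$-functor together with Tate's trace $H^2(\omega) \cong \QQ_p$, so the substance is non-degeneracy. I would dévisser once more: for a short exact sequence $0 \to W' \to W \to W'' \to 0$, the five-lemma applied to the ladder of long exact sequences for $W$ and for $W^\dual \otimes \omega$ transports perfectness along the slope filtration, reducing to the isoclinic case; twisting by $W_{a,h}$ then reduces to étale $W = W(V)$, where the statement is exactly Tate local duality. The technically demanding step, which cannot be sidestepped, is to match the cup product produced by our $\delta$-functor with (a nonzero scalar multiple of) Tate's cup product under the canonical isomorphism $H^i(W(V)) \cong H^i(G_K, V)$; only once this compatibility is in hand does the five-lemma argument actually close, and establishing it requires a careful tracking of how connecting homomorphisms in our abelian category of $B$-quotients are realized on the Herr complex.
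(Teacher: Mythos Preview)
Your proposal has the emphasis inverted. You treat the existence of a universal $\delta$-functor extending $H^0$, and its identification with the Herr complex, as preliminary checks, then devote all your effort to (a)--(e). In the paper, (a)--(e) are immediate from Theorem~\ref{T:liu} (Liu's theorem for $(\phi,\Gamma)$-modules) via the equivalence of categories; the only assertion requiring new work is precisely universality, and your sketch supplies no argument for it. The category of $B$-quotients is not known to have enough injectives, so you cannot simply invoke derived functors; nor does saying ``I would identify $H^i(W)$ with the cohomology of the Herr complex'' explain why that complex computes the \emph{universal} extension rather than merely some $\delta$-functor.

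The paper closes this gap by proving effaceability of the Herr-complex $H^i$. For $H^1$, it identifies $H^1$ with $\Ext^1$ computed in the category of $B$-quotients (Proposition~\ref{P:extend coverable}); this is not automatic, since one must check that an extension of $(\phi,\Gamma)$-quotients in the larger category of generalized $(\phi,\Gamma)$-modules is again a $(\phi,\Gamma)$-quotient. For $H^2$, it shows (Proposition~\ref{P:efface h2}) that any $B$-quotient embeds into one with $H^2=0$, by first embedding the free part into an isoclinic $B$-pair of arbitrarily negative slope (Corollary~\ref{C:extend etale}), on which $H^2$ vanishes by duality. That corollary in turn rests on the technical core of the appendix, Proposition~\ref{P:extend etale}: a $B$-pair with nonnegative slopes embeds into an \'etale one, proved by an induction on degree that manufactures a suitable rank-one extension using the Euler characteristic formula. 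None of this machinery appears in your outline. Your d\'evissage arguments for (a), (d), (e) along the slope filtration are essentially Liu's own proof of Theorem~\ref{T:liu} and are fine as far as they go, but the paper cites that result rather than reproving it; your argument for (c) is too vague to evaluate.
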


\section{$(\phi, \Gamma)$-modules}

It would be somewhat misleading to leave the story at that, because it would
fail to give a real sense as to how one \emph{proves} any of the results
explained above. In fact, one tends to make proofs by working on
the opposite side of an equivalence of categories, in which the Galois
action is replaced by some more ``commutative'' data.
(As remarked in \cite{kisin-crys} in a slightly different context, these
objects bear some resemblance to the local versions of certain
geometric objects, called shtukas, appearing in Drinfel'd's approach
to the Langlands correspondence for function fields. How to profit from this
observation remains a mystery.)

The \emph{Robba ring} over a coefficient field $L$ is the ring of
formal Laurent series $\sum_{n \in \ZZ} c_n x^n$ with $c_n \in L$,
such that the series converges on some annulus $\epsilon < |x| < 1$
(depending on the series);
this is a B\'ezout domain.
Let $\calR = \bB^{\dagger}_{\rig, \QQ_p}$
be the Robba ring over $\QQ_p$ with the series variable $x$
identified with $\pi = [\epsilon] - 1$. 
This ring admits a Frobenius map $\phi$ given by $\phi(\pi) = (1+\pi)^p - 1$,
and an action of the group $\Gamma = \Gal(\QQ_p(\mu_{p^\infty})/\QQ_p)$
satisfying $\gamma(\pi) = (1+\pi)^{\chi(\gamma)} - 1$,
for $\chi: \Gamma \to \ZZ_p$ the cyclotomic character.

A \emph{$(\phi, \Gamma)$-module} over $\calR$ is a finite free
$\calR$-module $D$ equipped with an isomorphism $\phi^* D \to D$,
viewed as a semilinear action of $\phi$ on $D$, and a continuous
(for a topology which we won't describe here)
semilinear $\Gamma$-action commuting with $\phi$. If we only require
that $D$ be finitely presented, we call the result a
\emph{generalized $(\phi, \Gamma)$-module}. One then has the following
equivalence \cite[Th\'eor\`eme~3.2.7]{berger-bpairs}.

\begin{theorem}[Berger] \label{T:equiv}
The category of $B$-pairs over $\QQ_p$ is equivalent to the category of
$(\phi, \Gamma)$-modules over $\calR$.
\end{theorem}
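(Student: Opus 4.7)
The strategy is to realize both categories as descent data for a common ``large'' coefficient ring carrying commuting actions of $\phi$ and $G_{\QQ_p}$. Concretely, let $\bt{B}$ be a suitable overconvergent subring of $\bt{B}_{\rig}^+[1/t]$ whose fixed ring $\bt{B}^{H_{\QQ_p}}$ contains $\calR$ (the overconvergent Robba ring), where $H_{\QQ_p} = \Gal(\barQQ_p/\QQ_p(\mu_{p^\infty}))$; moreover $\bB_e$ embeds $\phi$-equivariantly into $\bt{B}$ via the chain $\bB_e \subset \bB_{\max} \subset \bt{B}_{\rig}^+[1/t]$, and $\calR$ embeds into $\bB_{\dR}^+$ via $\pi \mapsto [\epsilon] - 1$. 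These two embeddings of $\calR$ realize the Robba ring inside the period rings on each side of the desired equivalence.

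To send a $B$-pair $W = (W_e, W_{\dR}^+)$ to a $(\phi, \Gamma)$-module, I would base-change $W_e$ to $\bt{B}$ and take $H_{\QQ_p}$-invariants. A Hilbert-90-type statement (the Cherbonnier--Colmez overconvergence theorem, suitably extended) should show that the result is finite free over $\bt{B}^{H_{\QQ_p}}$ of rank equal to $\rank_{\bB_e} W_e$. The $\bB_{\dR}^+$-lattice $W_{\dR}^+$ then singles out, via the embeddings $\calR \to \bB_{\dR}^+$ obtained by composing with $\phi^n$, a canonical finite free $\calR$-sublattice $D(W)$, stable under $\phi$ and $\Gamma$. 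Conversely, from a $(\phi, \Gamma)$-module $D$ over $\calR$, I would set $W_e(D) = (D \otimes_\calR \bt{B})^{\phi=1}$ and $W_{\dR}^+(D) = D \otimes_\calR \bB_{\dR}^+$; that $W_e(D)$ is finite free over $\bB_e$ of the right rank follows from the fundamental exact sequence $0 \to \QQ_p \to \bB_{\max}^{\phi=1} \to \bB_{\dR}/\bB_{\dR}^+ \to 0$, which controls $\phi$-descent along $\bB_e \hookrightarrow \bt{B}$.

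Mutual inverseness will reduce to three ingredients: Hilbert 90 for the $H_{\QQ_p}$-action on $\bt{B}$; $\phi$-descent comparing $\bt{B}^{\phi=1}$ with $\bB_e$; and compatibility of the two embeddings of $\calR$, into $\bt{B}$ and into $\bB_{\dR}^+$. The round trip on the $W_{\dR}^+$-side is nearly formal (the recovered lattice matches the original by construction); the delicate side is showing that the $\calR$-submodule carved out by $\phi$-invariants and by the $\bB_{\dR}^+$-lattice requirement really do coincide.

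The main obstacle will be the Hilbert-90 descent for $H_{\QQ_p}$-cohomology on $\bt{B}$, which is not formal and requires a Tate--Sen style analysis: one must show that every continuous semilinear $H_{\QQ_p}$-action on a finite free $\bt{B}$-module arises by base change from a (unique) finite free module over $\bt{B}^{H_{\QQ_p}}$. A secondary technical point will be proving that the finitely generated $\calR$-module produced above is in fact finite free; for this I would appeal to the slope filtration theorem (Theorem~\ref{T:filtration}), reducing to the isoclinic case and then matching slopes via Lemma~\ref{L:maps}(c) after tensoring with appropriate invertible $B$-pairs.
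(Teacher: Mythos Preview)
The paper does not supply its own proof of this theorem; it is attributed to Berger with a reference to \cite[Th\'eor\`eme~3.2.7]{berger-bpairs}. Your outline is broadly in the spirit of Berger's construction: one introduces the intermediate ring $\bt{B}^\dagger_{\rig}$ carrying commuting actions of $\phi$ and $G_{\QQ_p}$, passes from $W_e$ to a free module over $\bt{B}^\dagger_{\rig}$, takes $H_{\QQ_p}$-invariants (via a Tate--Sen/Cherbonnier--Colmez decompletion) to land over $\bB^\dagger_{\rig,\QQ_p}$, and then uses $W_{\dR}^+$ to pin down the $\calR$-lattice; the inverse functor is essentially the one you wrote.

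There is, however, a circularity in your sketch as written. You propose to secure freeness of the resulting $\calR$-module by invoking Theorem~\ref{T:filtration} and Lemma~\ref{L:maps}(c); but in this paper those statements about $B$-pairs are explicitly \emph{deduced from} Theorem~\ref{T:equiv} (the slope filtration is originally a theorem about $\phi$-modules over $\calR$, transported to $B$-pairs via the very equivalence you are proving). Berger's argument does not invoke slope filtrations for this step: freeness comes more directly, using that $\calR$ is B\'ezout and that the module is torsion-free of the correct rank after base change to $\bt{B}^\dagger_{\rig}$. If you want to keep your line, cite the slope filtration in its original Robba-ring formulation rather than Theorem~\ref{T:filtration}. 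A smaller point: your specification of $\bt{B}$ as ``a suitable overconvergent subring of $\bt{B}_{\rig}^+[1/t]$'' is too vague to carry the Hilbert-90 step; the precise ring $\bt{B}^\dagger_{\rig}$ and its Fr\'echet topology are what make the Tate--Sen machinery run.
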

There is an analogous equivalence over $K$, using a suitable finite
extension of $\calR$ which is again isomorphic to a Robba ring (over
a certain extension of $\QQ_p$); for simplicity, we omit further details.

A \emph{$(\phi, \Gamma)$-quotient} is a generalized
$(\phi, \Gamma)$-module of the form $D/D_1$ for some inclusion
$D_1 \hookrightarrow D$ of $(\phi, \Gamma)$-modules.
It is easily checked that any generalized $(\phi, \Gamma)$-module
occurring as a submodule or quotient of a 
$(\phi, \Gamma)$-quotient is also a $(\phi, \Gamma)$-quotient.
(We expect that not every generalized $(\phi, \Gamma)$-module
occurs as a $(\phi, \Gamma)$-quotient, but we have no example.)
\begin{cor}
The category of $B$-quotients is equivalent to
the category of $(\phi, \Gamma)$-quotients.
\end{cor}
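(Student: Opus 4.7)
The plan is to transport the equivalence of Theorem~\ref{T:equiv} through the respective quotient constructions on both sides. Write $F$ for the functor from $B$-pairs to $(\phi,\Gamma)$-modules of Theorem~\ref{T:equiv}. Since $F$ is an equivalence of additive categories, it preserves and reflects monomorphisms, biproducts, and intersections of subobjects; in particular, an inclusion $W_1 \hookrightarrow W_2$ of $B$-pairs corresponds bijectively to an inclusion $F(W_1) \hookrightarrow F(W_2)$ of $(\phi,\Gamma)$-modules over $\calR$.

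On objects, I would send the $B$-quotient $(W_1 \hookrightarrow W_2)$ to the cokernel $F(W_2)/F(W_1)$ formed inside the ambient abelian category of generalized $(\phi,\Gamma)$-modules; by definition this is a $(\phi,\Gamma)$-quotient. Essential surjectivity is then immediate: an arbitrary $(\phi,\Gamma)$-quotient $D/D_1$ is hit by the $B$-quotient obtained by transporting the inclusion $D_1 \hookrightarrow D$ back through a quasi-inverse to $F$.

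For morphisms, observe that the span description in the excerpt---namely, a morphism $(W_1 \hookrightarrow W_2) \to (W'_1 \hookrightarrow W'_2)$ as a subobject $X \subseteq W_2 \oplus W'_2$ containing $W_1 \oplus W'_1$ with the stated surjectivity and preimage conditions---is precisely the universal graph-of-a-morphism recipe for $\Hom$-sets between cokernels in an ambient abelian category. Applying $F$ turns any such $X$ into a subobject of $F(W_2) \oplus F(W'_2)$ satisfying the analogous conditions, which by the same recipe corresponds to a unique morphism $F(W_2)/F(W_1) \to F(W'_2)/F(W'_1)$ in the category of generalized $(\phi,\Gamma)$-modules. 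The reverse assignment proceeds symmetrically, and compatibility with addition and composition of morphisms is a routine diagram chase.

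The step I expect to require the most care is the underlying categorical lemma: that the $\Hom$-sets in the category of $B$-quotients (respectively $(\phi,\Gamma)$-quotients) really do compute $\Hom$-sets between the corresponding cokernels in any ambient abelian category in which those cokernels exist. Once isolated as a small lemma, however, this applies identically on both sides of $F$, so the equivalence then follows formally; all of the genuine $p$-adic input has already been absorbed into Theorem~\ref{T:equiv}.
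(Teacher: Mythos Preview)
Your proposal is correct, and since the paper states this corollary without proof (treating it as an immediate consequence of Theorem~\ref{T:equiv}), your argument is precisely the natural filling-in of what the paper leaves implicit. Your identification of the span-style morphism description as the graph-of-a-morphism recipe for $\Hom$-sets between cokernels is the right observation, and once this is checked, the equivalence is indeed a formal transport along $F$.
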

In particular, any $B$-quotient $W$ admits a torsion subobject $X$
such that $W/X$ is a $B$-pair (since the same is true of 
finitely presented $\calR$-modules).

There is a sort of cohomology for generalized $(\phi, \Gamma)$-modules,
computed by a complex introduced by Colmez \cite{colmez}
derived from work of Herr \cite{herr}.
For $p > 2$, it can be constructed as follows.
Choose a topological generator $\gamma$ of $\Gamma$.
Given a generalized $(\phi, \Gamma)$-module $D$, for $i=0,1,2$,
let $H^i$ be the cohomology at position $i$ of the complex
\[
0 \to D \to D \oplus D \to D \to 0
\]
where the first map is $x \mapsto ((\gamma-1)x, (\phi-1)x)$
and the second map is
$(x,y) \mapsto ((\phi-1)x - (\gamma-1)y)$.
One also constructs cup product pairings, the only nonobvious one of which
is the map $H^1(D) \times H^1(D') \to H^2(D \otimes D')$ given by
$(x,y),(z,t) \mapsto y \otimes  \gamma(z) - x \otimes \phi(t)$.
(For $p = 2$, let $\gamma$ be a generator of $\Gamma/\{\pm 1\}$,
and replace $D$ by its invariants under $\{\pm 1\}$ in the construction
of the complex.)

We can now state what Liu actually proves in \cite{liu}. Note that
the more general result working with the Robba ring corresponding to $K$,
in which everything is the same except that the right side of (c) 
must be multiplied by $[K:\QQ_p]$, follows from the case $K =\QQ_p$
by a version of Shapiro's lemma for $(\phi, \Gamma)$-modules.
\begin{theorem}[Liu] \label{T:liu}
Let $D$ be a $(\phi, \Gamma)$-module over $\calR$.
\begin{enumerate}
\item[(a)] 
For $i=0,1,2$, $H^i(D)$ is finite dimensional over $\QQ_p$.
\item[(b)]
If $D$ is torsion, then $H^2(D) = 0$.
\item[(c)]
We have $\sum_{i=0}^2 (-1)^i \dim_{\QQ_p} H^i(D) = - \rank(D/D_{\tors})$.
\item[(d)]
If $D$ is free, then for 
$i=0,1,2$, the pairing
\[
H^i(D) \times H^{2-i}(D^\dual \otimes \omega)
\to H^2(D \otimes D^\dual \otimes \omega) \to H^2(\omega) \cong \QQ_p
\]
is perfect.
\end{enumerate}
Moreover, on the subcategory of $p$-adic representations, the $H^i$
are canonically naturally isomorphic to Galois cohomology (in a fashion
compatible with connecting homomorphisms).
\end{theorem}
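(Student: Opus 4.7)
The plan is to bootstrap from the étale case, where the statements reduce to Tate's classical theorems, by combining Herr's identification of $(H^i)$ with Galois cohomology on étale $(\phi,\Gamma)$-modules (which settles the étale case and also delivers the final comparison statement) with a dévissage along the slope filtration of Theorem~\ref{T:filtration} for general $D$. Throughout one exploits that the cohomology of the Herr complex forms a $\delta$-functor, so short exact sequences of $(\phi,\Gamma)$-modules produce long exact sequences in $H^*$.

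I would handle torsion modules first, since they contribute all of (b) and enter (a), (c) through the map $D \to D/D_{\tors}$. Every torsion generalized $(\phi,\Gamma)$-module is a successive extension of elementary cyclic pieces $\calR/f\calR$ with $f$ a Frobenius-stable product of cyclotomic polynomials in $\pi$, and on each such piece one writes down the Herr complex explicitly and verifies finite-dimensionality of $H^i$, the vanishing $H^2=0$, and the vanishing of the Euler characteristic by direct computation with $\phi$ and $\gamma$. This proves (b) and reduces (a), (c) to the torsion-free case.

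For torsion-free $D$, the slope filtration together with the long exact sequence in $H^*$ reduces (a) and (c) to the isoclinic case. Finite-dimensionality in the isoclinic case of slope $a/h$ can be extracted from the associated $(a/h)$-representation over $\QQ_{p^h}$: after this descent, the relevant operator in the Herr complex becomes $\phi^h - p^a$, which behaves like an invertible scalar up to a compact perturbation and so admits a Tate-style Fredholm analysis; the étale case ($a=0,h=1$) is exactly Tate's original argument via Herr. Additivity of the Euler characteristic in short exact sequences, combined with the rank-one classification of Lemma~\ref{L:rank 1} and a direct computation for rank-one isoclinic modules, then propagates (c) through the filtration.

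The main obstacle is the duality (d). With (a) and (c) in hand, $H^i(D)$ and $H^{2-i}(D^\dual \otimes \omega)$ have matching total dimensions (using that $\omega$ is rank-one étale), so perfection reduces to non-degeneracy on one side. Functoriality of the cup product in short exact sequences and the five-lemma reduce this via the slope filtration to the isoclinic case, and further, via tensoring with a suitable rank-one $(\phi,\Gamma)$-module $L$ on $D$ and the compensating twist by $L^\dual$ on $D^\dual \otimes \omega$, to the étale case, where it is Tate local duality once one checks that the Herr cup product realizes the Tate pairing. The principal difficulty is tracking this pairing through all the twists and dévissage: one must verify that the explicit cup product formula is compatible with connecting homomorphisms, with the trace-type isomorphism $H^2(L \otimes L^\dual \otimes \omega) \cong H^2(\omega)$ that absorbs the compensating twists, and with the Tate pairing on étale modules—each a concrete but delicate calculation on the Herr complex, and together the technical heart of the theorem.
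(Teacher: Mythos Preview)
The paper does not prove this theorem; it is attributed to Liu and cited to \cite{liu} without argument. The appendix then \emph{uses} Theorem~\ref{T:liu} as a black box to establish Theorem~\ref{T:liu-bpairs}. So there is no proof in the paper against which to compare your proposal.

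That said, your sketch has a genuine gap in the reduction for (d). You propose to pass from the isoclinic case to the \'etale case by tensoring $D$ with a rank-one $(\phi,\Gamma)$-module $L$ and compensating with $L^\dual$ on the dual side. But by Lemma~\ref{L:rank 1}, a rank-one $B$-pair has integer degree, hence integer slope; tensoring by it shifts all slopes by that integer (Lemma~\ref{L:maps}(c)). An isoclinic module of slope $a/h$ with $h>1$ therefore cannot be made \'etale by any such twist. The same issue afflicts your finiteness argument for non-integer isoclinic slopes: the Herr complex is built from $\phi-1$, not $\phi^h-p^a$, and passing to an $(a/h)$-representation does not directly rewrite the complex in the way you suggest.

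The paper does indicate how Liu actually bridges this gap. In the proof of Proposition~\ref{P:extend etale} the parenthetical remark says the argument there is ``a variant of the argument used in \cite{liu} to reduce Tate duality to the \'etale case.'' The mechanism is not a twist but an \emph{embedding}: one produces a short exact sequence $0 \to D \to X \to Y \to 0$ with $X$ \'etale (after first arranging nonnegative slopes), and then runs the long exact sequence together with the \'etale case and the five-lemma. Your d\'evissage along the slope filtration is a reasonable organizing device for (a) and (c), but for (d) you should replace the twist by this embedding-into-\'etale step.
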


\appendix
\section{Cohomology of $B$-pairs}

For $W$ a $B$-pair or $B$-quotient, 
let $D(W)$ be the associated $(\phi, \Gamma)$-module or
$(\phi, \Gamma)$-quotient 
(see \cite[Th\'eor\`eme~2.2.7]{berger-bpairs} for the construction),
and write $H^i(W)$ instead of $H^i(D(W))$.
For this to be consistent with Theorem~\ref{T:liu-bpairs},
we must have $H^0(D(W)) = \Hom(W_0,W)$; fortunately,
Theorem~\ref{T:equiv} implies $\Hom(W_0,W) = \Hom(D(W_0), D(W))$
and it is trivial to check that $\Hom(D(W_0),D(W)) = H^0(D(W))$.
All of the
assertions of Theorem~\ref{T:liu-bpairs} follow from Theorem~\ref{T:liu}
as stated, except for the fact that the $\delta$-functor formed by the $H^i$
is universal.

The purpose of this appendix is to fill in this gap, as the proof does not
appear elsewhere. It is mostly meant for experts,
and does not maintain the expository style we have attempted to maintain
in the main text. 

Besides Theorem~\ref{T:liu}, and the equivalence given by
Theorem~\ref{T:equiv},
we will make repeated use of the following facts.
\begin{itemize}
\item
If $0 \to W_1 \to W \to W_2 \to 0$ is an exact sequence of $B$-quotients,
and $W_1, W_2$ are $B$-pairs, then so is $W$.
\item
If $D \to D_2 \to 0$ is an exact sequence of generalized
$(\phi, \Gamma)$-modules, and $D$ is a $(\phi, \Gamma)$-quotient, then so is
$D_2$.
\end{itemize}

\begin{lemma} \label{L:twist}
Let $W$ be a $B$-pair. Then there exists a character $\delta:
G_K \to \QQ_p^\times$ such that $\Hom(W(\delta), W) = 0$.
\end{lemma}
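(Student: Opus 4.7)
My plan is to prove the stronger statement that only finitely many characters $\delta \colon G_K \to \QQ_p^\times$ satisfy $\Hom(W(\delta), W) \neq 0$, and then to pick $\delta$ outside this finite set.

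By Theorem~\ref{T:filtration} we have a slope filtration $0 \subset W_1 \subset \cdots \subset W_l = W$ with $W_i/W_{i-1}$ isoclinic of slope $s_i$ and $s_1 < \cdots < s_l$. Since $\Hom(W(\delta), -)$ is left exact, a single $\delta$ with $\Hom(W(\delta), W_i/W_{i-1}) = 0$ for every $i$ will force $\Hom(W(\delta), W) = 0$ by induction along the filtration. For $s_i > 0$, Lemma~\ref{L:maps}(b) gives vanishing for every $\delta$, since $W(\delta)$ is isoclinic of slope $0 < s_i$; for $s_i = 0$, the piece is \'etale, hence of the form $W(V_i)$ for some $p$-adic representation $V_i$, and $\Hom(W(\delta), W(V_i)) \cong \Hom_{G_K}(\QQ_p(\delta), V_i)$ is nonzero for only the finitely many characters appearing as $G_K$-subrepresentations of $V_i$.

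The main obstacle is the case $s_i < 0$, where vanishing is not automatic: for example, $(\bB_e, \bB_{\dR}^+)$ does embed into the slope-$(-1)$ $B$-pair $(\bB_e, t^{-1} \bB_{\dR}^+)$. Here I would argue as follows. Setting $W' := W_i/W_{i-1}$, suppose $v \in W'_e \cap (W'_{\dR})^+$ is nonzero and $\delta^{-1}$-equivariant. Let $k \geq 0$ be the largest integer with $v \in t^k (W'_{\dR})^+$; this is finite because $\bigcap_k t^k (W'_{\dR})^+ = 0$ in $W'_{\dR}$. The class $v/t^k$ modulo $t (W'_{\dR})^+$ is then a nonzero $\delta^{-1}\chi^{-k}$-equivariant element of the $\CC_p$-semilinear $G_K$-representation $(W'_{\dR})^+ / t (W'_{\dR})^+$, which has dimension $\rank W'$; by Sen--Tate theory such eigenvectors occur for only finitely many characters of $G_K$. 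Combined with a uniform upper bound on $k$, expected from the interplay between the $\bB_e$-lattice $W'_e$ and the $\bB_{\dR}^+$-lattice $(W'_{\dR})^+$ enforced by the $\phi$-structure on $W' = W_{a_i, h_i}(V_{a_i, h_i})$, this leaves only finitely many bad $\delta$ from this piece.

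Taking the union of the finitely many bad sets over all $i$ gives a finite total bad set. Since $\Hom_{\mathrm{cts}}(G_K, \QQ_p^\times)$ is infinite---for example, it contains $\chi^n$ for every $n \in \ZZ$---we may choose $\delta$ outside, completing the proof. The hardest step is the uniform bound on $k$ in the slope-$(<0)$ case; I expect this to follow from the finiteness of the Hodge--Tate (or Sen) weights of $W'$, which caps the $t$-adic spread of $W'_e$ against $(W'_{\dR})^+$.
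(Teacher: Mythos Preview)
Your proof has a genuine error in the negative-slope case, not merely the acknowledged gap. The claim that ``by Sen--Tate theory such eigenvectors occur for only finitely many characters of $G_K$'' is false. Take the simplest case $(W'_{\dR})^+/t(W'_{\dR})^+ \cong \CC_p$ with the standard $G_K$-action. A $\psi$-eigenvector is an element $c \in \CC_p^\times$ with $g(c)/c = \psi(g)$. For any unramified character $\psi$ with $\psi(\mathrm{Frob}_K) = \alpha \in \ZZ_p^\times$, one can solve $\mathrm{Frob}(c)/c = \alpha$ in $\widehat{\mathcal{O}_{K^{\mathrm{ur}}}}^\times$ (the map $c \mapsto \mathrm{Frob}(c)/c$ is surjective there, by a successive-approximation argument reducing to $\overline{\FF}_p^\times$ and Artin--Schreier), so \emph{every} such $\psi$ occurs. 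There are uncountably many of these, all $\QQ_p^\times$-valued. Sen theory constrains the Hodge--Tate--Sen weights, i.e., the behaviour of eigencharacters on a finite-index subgroup of inertia, but places no finiteness restriction on the unramified part. So even granting your hoped-for bound on $k$, the argument does not go through; conversely, without the bound on $k$ you have infinitely many Hodge--Tate weights available as well. You would need to use the constraint that $v$ lies in $W'_e$ much more seriously, and at that point you are essentially back to analysing rank-$1$ sub-$B$-pairs.

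The paper's argument avoids all of this and is considerably shorter: it is a straight induction on $\rank W$, with no use of the slope filtration. In rank $1$, Lemma~\ref{L:rank 1} says $W_e \cong \bB_e(\delta')$ for a unique $\delta'$, so a nonzero $G_K$-equivariant $\bB_e$-map $\bB_e(\delta) \to \bB_e(\delta')$ forces $\delta = \delta'$. In rank $>1$, either no $\delta$ works (done), or some $\delta_0$ gives a rank-$1$ sub-$B$-pair $t^i W(\delta_0) \hookrightarrow W$; apply the induction hypothesis to both this sub and the quotient $W_1$, and use left exactness of $\Hom(W(\delta),-)$. Your filtration-by-slopes idea is natural, and the $s_i \geq 0$ cases do work as you say, but the $s_i < 0$ case is exactly where the interesting content lies, and the paper's rank induction handles all slopes uniformly without ever touching $\CC_p$-semilinear representations.
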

\begin{proof}
We will show that all but finitely many $\delta$ have this property,
by induction on $\rank W$.
Suppose first that $\rank W = 1$; by Lemma~\ref{L:rank 1},
we can write $W = t^i W_0 \otimes W(\delta')$ for
some $i \in \ZZ$ and some character $\delta': G_K \to \QQ_p^\times$.
Then to have a nonzero map $W_e(\delta) \to W_e = W_e(\delta')$,
we must have $\delta = \delta'$.

Suppose now that $\rank W > 1$. If there is no $\delta_0$ such that
$\Hom(W(\delta_0),W) \neq 0$, we are done. Otherwise, we have a short exact
sequence
\[
0 \to t^i W(\delta_0) \to W \to W_1 \to 0
\]
for some $i \in \ZZ$. By the induction hypothesis,
for all but finitely many $\delta$,
$\Hom(W(\delta), t^i W(\delta_0)) = \Hom(W(\delta), W_1) = 0$,
so $\Hom(W(\delta), W) = 0$.
\end{proof}

By the \emph{slopes} of a $B$-pair $W$, we will mean those numbers $s_1,
\dots, s_l$ occurring in Theorem~\ref{T:filtration}.
\begin{lemma} \label{L:extend positive}
Let $0 \to W_1 \to W \to W_2 \to 0$ be a nonsplit exact sequence of $B$-pairs,
in which $W_1$ has positive slopes, and $W_2$ has 
rank $1$ and degree $-1$. Then $W$ has nonnegative slopes.
\end{lemma}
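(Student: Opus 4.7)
The plan is to argue by contradiction: assume $W$ has some negative slope, and let $W^{(1)} \subseteq W$ be the first step of the slope filtration from Theorem~\ref{T:filtration}, isoclinic of slope $\sigma_1 < 0$ and rank $r_1$.

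A sub-$B$-pair of a $B$-pair whose slopes are all $\geq s$ again has all slopes $\geq s$: this follows by applying Lemma~\ref{L:maps}(b) inductively across the slope filtration of the ambient object. In particular, any nonzero sub-$B$-pair of $W_1$ has all slopes strictly positive. If $\sigma_1 < -1$, then Lemma~\ref{L:maps}(b) gives $\Hom(W^{(1)}, W_2) = 0$, so the composition $f \colon W^{(1)} \hookrightarrow W \twoheadrightarrow W_2$ vanishes, placing $W^{(1)}$ inside $W_1$---contradicting positivity. Thus $\sigma_1 \in [-1,0)$, and the same positivity argument shows that $f$ itself is nonzero.

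The image $f(W^{(1)})$ is a nonzero rank-$1$ sub of $W_2$. As a quotient of the isoclinic $W^{(1)}$ (slope $\sigma_1$) its slope is $\leq \sigma_1$, and as a sub of the isoclinic $W_2$ (slope $-1$) its slope is $\geq -1$; being an integer in $[-1,\sigma_1] \subset [-1,0)$, it must equal $-1$. So $f(W^{(1)})$ and $W_2$ are both rank-$1$ $B$-pairs of slope $-1$; by Lemma~\ref{L:rank 1} they take the form $(\bB_e(\delta'), t^{-1}\bB_{\dR}^+(\delta'))$, and the inclusion is multiplication by an element $b \in \bB_e$ that must also lie in $\bB_{\dR}^+$ (to preserve the $\bB_{\dR}^+$-lattice). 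The fundamental exact sequence forces $\bB_e \cap \bB_{\dR}^+ = \QQ_p$ inside $\bB_{\dR}$, so $b$ is a nonzero scalar, the inclusion is an isomorphism, and $f$ is surjective.

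Let $K = \ker f = W^{(1)} \cap W_1$, of rank $r_1 - 1$ and degree $r_1\sigma_1 + 1$. If $r_1 = 1$, then $K = 0$ and $f$ is an isomorphism $W^{(1)} \cong W_2$, giving a section of $W \twoheadrightarrow W_2$ and contradicting nonsplittedness. If $r_1 > 1$, then $K$ is a nonzero sub of $W_1$ and so $\mu(K) > 0$; writing $\sigma_1 = a/h$ with $\gcd(a,h) = 1$, the rank $r_1$ must be a multiple $mh$ of $h$ (from the description of isoclinic slope-$a/h$ $B$-pairs via $(a/h)$-representations), whence $\mu(K) = (ma+1)/(mh-1) \leq 0$ because $a \leq -1$ and $m \geq 1$---contradiction. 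The principal obstacles are the two rigidity inputs---$\bB_e \cap \bB_{\dR}^+ = \QQ_p$ from the fundamental exact sequence, and the divisibility $h \mid r_1$ for isoclinic slope-$a/h$ $B$-pairs---together with the quotient-of-isoclinic bound on slopes; the remainder is slope-filtration bookkeeping.
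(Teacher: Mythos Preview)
Your proof is correct and follows essentially the same strategy as the paper's: both argue by contradiction, take the first step $X = W^{(1)}$ of the slope filtration, and analyze how it sits relative to $W_1$ and $W_2$ to force $X \to W_2$ to be an isomorphism, contradicting nonsplitness. Your version is a bit more elaborate than necessary---the separate treatment of $\sigma_1 < -1$ and the divisibility $h \mid r_1$ can both be replaced by the single observation that $\deg W^{(1)} = r_1\sigma_1$ is a negative integer, hence $\leq -1$, which immediately gives $\deg K \leq 0$---but the use of the fundamental exact sequence to show that a rank-$1$, degree-$(-1)$ sub of $W_2$ must equal $W_2$ is a nice explicit justification of a step the paper's proof takes for granted.
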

\begin{proof}
Suppose the contrary; then 
by Theorem~\ref{T:filtration},
there is a short exact sequence
$0 \to X \to W \to W/X \to 0$ of $B$-pairs
with $X$ isoclinic of negative slope.
Note that $\deg(X) = \deg(X \cap W_1) + \deg(X/(X \cap W_1))$;
by Lemma~\ref{L:maps}, the first term is positive unless $X \cap W_1 = 0$,
and the second 
term is at least $-1$. The only way to have $\deg(X) < 0$ is to
have $X \cap W_1 = 0$ and $X \to W_2$ an isomorphism, but this
only happens if the sequence splits.
\end{proof}

\begin{lemma} \label{L:neg slopes}
If $W$ is a $B$-pair with negative slopes, then $H^2(W) = 0$.
\end{lemma}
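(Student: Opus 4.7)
The plan is to apply Tate local duality to convert $H^2$ into an $H^0$, and then rule out the resulting Hom on slope-theoretic grounds. By Theorem~\ref{T:liu-bpairs}(e) with $i=2$, there is a canonical isomorphism
\[
H^2(W) \cong H^0(W^\dual \otimes \omega)^\dual = \Hom(W_0, W^\dual \otimes \omega)^\dual,
\]
so it suffices to show $\Hom(W_0, W^\dual \otimes \omega) = 0$.

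First I would check that every slope of $W^\dual \otimes \omega$ is strictly positive. Dualizing negates slopes, so $W^\dual$ has only positive slopes; and $\omega$ is associated to a $p$-adic representation, hence is \'etale, i.e.\ isoclinic of slope $0$. Applying Theorem~\ref{T:filtration} to $W^\dual$ and tensoring each isoclinic graded piece with $\omega$, Lemma~\ref{L:maps}(c) shows that each graded piece of the induced filtration on $W^\dual \otimes \omega$ remains isoclinic of positive slope; equivalently, Corollary~\ref{C:extend ineq} gives the same conclusion directly.

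Then I would rule out a nonzero map $f\colon W_0 \to W'$ for $W'$ any $B$-pair with all slopes strictly positive. Let $0 = W'_0 \subset \cdots \subset W'_l = W'$ be the slope filtration of Theorem~\ref{T:filtration}, and let $i$ be the smallest index with $f(W_0) \subseteq W'_i$. By minimality the induced composition $W_0 \to W'_i \to W'_i/W'_{i-1}$ is nonzero, contradicting Lemma~\ref{L:maps}(b) since $W_0$ is isoclinic of slope $0$ while $W'_i/W'_{i-1}$ is isoclinic of slope $s_i > 0$. I do not anticipate any real obstacle here; the only minor subtlety is confirming that $\omega$ is isoclinic of slope $0$, which reduces to the standard fact that every $p$-adic representation gives rise to an \'etale $B$-pair.
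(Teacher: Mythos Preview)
Your argument is correct and follows the same route as the paper: apply duality to reduce $H^2(W)$ to $H^0(W^\dual\otimes\omega)$, then kill the latter on slope grounds via Lemma~\ref{L:maps}. The only point worth flagging is that you invoke Theorem~\ref{T:liu-bpairs}(e), whereas the paper cites Theorem~\ref{T:liu} directly; since Lemma~\ref{L:neg slopes} sits in the appendix whose purpose is to complete the proof of Theorem~\ref{T:liu-bpairs} (specifically the universality assertion), citing Theorem~\ref{T:liu} is the logically cleaner choice, even though part~(e) itself is already available from Theorem~\ref{T:liu} before any of the appendix lemmas are needed.
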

\begin{proof}
By Theorem~\ref{T:liu}, $H^2(W)$ is dual to $H^0(W^\dual \otimes \omega)$,
which vanishes by Lemma~\ref{L:maps}.
\end{proof}

\begin{prop} \label{P:extend etale}
Let $W$ be a $B$-pair whose slopes are all 
nonnegative. Then there exists a short exact sequence $0 \to W \to X \to Y 
\to 0$ such that $X$ is \'etale.
\end{prop}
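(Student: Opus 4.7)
I induct on $\deg W$, which is a nonnegative integer under the slope hypothesis. In the base case $\deg W = 0$, every slope in Theorem~\ref{T:filtration} is $\geq 0$ and the rank-weighted sum of slopes is zero, so all slopes vanish; thus $W$ is \'etale and we set $X = W$.

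For the inductive step $\deg W \geq 1$, the plan is to construct a short exact sequence
\[
0 \to W \to W^* \to W''_\delta \to 0, \qquad W''_\delta := (\bB_e(\delta),\, t^{-1}\bB_{\dR}^+(\delta)),
\]
for a carefully chosen character $\delta: G_K \to \QQ_p^\times$, with $W^*$ a $B$-pair of nonneg slopes (the first bullet listed before Lemma~\ref{L:twist} guarantees the middle object is a $B$-pair, not just a $B$-quotient). Since $\deg W^* = \deg W - 1$, the induction hypothesis applied to $W^*$ then embeds it in an \'etale $X$, yielding $W \hookrightarrow X$.

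To produce $W^*$, let $W_\et \subseteq W$ be the maximal \'etale sub-$B$-pair (the first step of the slope filtration) and $W_+ := W/W_\et$; since $\deg W > 0$, $W_+$ is nonzero with all slopes strictly positive. Theorem~\ref{T:liu-bpairs}(d) applied to $\Ext^\bullet(W''_\delta, W_+) \cong H^\bullet(W_+ \otimes (W''_\delta)^\dual)$ gives
\[
\dim_{\QQ_p} \Ext^1(W''_\delta, W_+) \geq [K:\QQ_p]\,\rank W_+ > 0
\]
for any $\delta$, so a nonsplit extension $0 \to W_+ \to X_+ \to W''_\delta \to 0$ exists; Lemma~\ref{L:extend positive} (applicable because the slopes of $W_+$ are strictly positive) forces $X_+$ to have all slopes $\geq 0$. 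Now lift the class of $X_+$ along $W \twoheadrightarrow W_+$: the obstruction in the long exact $\Ext$-sequence coming from $0 \to W_\et \to W \to W_+ \to 0$ lies in $\Ext^2(W''_\delta, W_\et)$, and Theorem~\ref{T:liu-bpairs}(e) identifies this with $\Hom(W_\et, W''_{\delta\chi})^\dual$, where $\chi$ is the cyclotomic character (using $W''_\delta \otimes \omega \cong W''_{\delta\chi}$). Running the proof of Lemma~\ref{L:twist}, but now for slope-$(-1)$ twists rather than slope-$0$ twists, shows that only finitely many $\delta$ can give a nonzero such $\Hom$: each rank-$1$ sub-quotient of $W_\et$ contributes at most one bad character. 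Choose $\delta$ outside this finite bad set, so that the obstruction vanishes and the class lifts to some $W^* \in \Ext^1(W''_\delta, W)$ whose image under the connecting map is exactly $[X_+]$, i.e.\ $W^*/W_\et \cong X_+$. Then $W^*$ sits in an exact sequence $0 \to W_\et \to W^* \to X_+ \to 0$; since $W_\et$ and $X_+$ both have slopes $\geq 0$, Corollary~\ref{C:extend ineq} forces the same for $W^*$, completing the induction.

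The principal obstacle is the choice of $\delta$ securing the $\Ext^2$-vanishing; this is an analogue of Lemma~\ref{L:twist} for the slope-$(-1)$ twist and is where the bulk of the work lies. Once that vanishing is in hand, the rest reduces to a formal long-exact-sequence chase together with one application each of Lemma~\ref{L:extend positive} and Corollary~\ref{C:extend ineq}.
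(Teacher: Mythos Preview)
Your argument is correct and follows essentially the same route as the paper's: induct on $\deg W$, split off the maximal \'etale sub-$B$-pair $W_\et$ with positive-slope quotient $W_+$, use a Lemma~\ref{L:twist}-style argument to pick a rank-$1$ degree-$(-1)$ object $T$ so that the relevant $H^2$ obstruction vanishes, produce an extension $0 \to W \to W^* \to T \to 0$ whose pushout to $W_+$ is nonsplit, and then invoke Lemma~\ref{L:extend positive} and Corollary~\ref{C:extend ineq} exactly as you do. The only cosmetic difference is in how the extension class is located in the long exact sequence: the paper compares $\dim H^1(W \otimes T^\vee)$ with $\dim H^1(W_\et \otimes T^\vee)$ via the Euler characteristic (using $H^2(W_\et \otimes T^\vee)=0$) to see that $H^1(W_\et \otimes T^\vee) \to H^1(W \otimes T^\vee)$ is not surjective, whereas you use the same vanishing to see that $H^1(W \otimes T^\vee) \to H^1(W_+ \otimes T^\vee)$ is surjective and lift a nonzero class. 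These are the two halves of the same four-term exact sequence, and both rely on the identical vanishing input.

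One small caveat: you cite Theorem~\ref{T:liu-bpairs}(d),(e) and freely identify $\Ext^i(W''_\delta,-)$ with $H^i((W''_\delta)^\vee \otimes -)$. Since Proposition~\ref{P:extend etale} is itself an ingredient in the proof of the universality part of Theorem~\ref{T:liu-bpairs}, you should instead cite Theorem~\ref{T:liu} directly and phrase the long exact sequence in terms of the Herr-complex $H^i$ rather than $\Ext^i$; the identification $H^1 = \Ext^1$ for $B$-pairs is what you actually need, and it is available independently (the paper's own proof uses it when it says ``form the corresponding extension''). This is only a matter of bookkeeping, not a gap in the mathematics.
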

\begin{proof}
(This argument is a variant of the argument used in \cite{liu}
to reduce Tate duality to the \'etale case.)
We induct on $s = \deg(W) \geq 0$. 
If $s=0$, then $W$ is \'etale and
we are done. Otherwise, let $W_1$ be the maximal \'etale subobject of $W$
(which may be zero) and put $W_2 = W/W_1$. By applying
Lemma~\ref{L:twist}, we can choose a $B$-pair $T$ which is isoclinic
of rank 1 and degree -1, such that
$H^0(W_1^\dual \otimes T \otimes \omega) = 0$.

Put $W' = W \otimes T^\dual, W'_1 = W_1 \otimes T^\dual,
W'_2 = W_2 \otimes T^\dual$. Apply Theorem~\ref{T:liu} to obtain
\begin{align*}
\dim H^1(W') &= \dim H^0(W') + \dim H^2(W')
+ [K:\QQ_p] \rank(W') \\
\dim H^1(W'_1) &= \dim H^0(W'_1) + \dim H^2(W'_1)
+ [K:\QQ_p] \rank(W'_1).
\end{align*}
We claim each of the three 
terms in the first row is greater than or equal to the corresponding term
in the second row. The inequality is an equality on the first terms,
because both terms vanish by Lemma~\ref{L:maps}.
The inequality on the second terms holds because
by Theorem~\ref{T:liu}, $H^2(W'_1) = 
H^0((W'_1)^\dual \otimes \omega)^\dual = 0$. 
The inequality on the third terms 
is strict because $W \neq W_1$ by assumption.

We conclude that $\dim H^1(W') > \dim H^1(W'_1)$, so in particular
the map $H^1(W'_1) \to H^1(W')$ cannot be surjective. Choose a class in
$H^1(W')$ not in the image of this map; then the resulting class in
$H^1(W'_2)$ is nonzero. Form the corresponding extension
\[
0 \to W \to X \to T \to 0;
\]
then we also have an exact sequence
\[
0 \to W_1 \to X \to X/W_1 \to 0
\]
and a nonsplit exact sequence
\[
0 \to W_2 \to X/W_1 \to T \to 0.
\]
By Lemma~\ref{L:extend positive}, $X/W_1$ has nonnegative slopes,
as then does $X$ by Corollary~\ref{C:extend ineq}.
Since $\deg(X) = s - 1$, we may deduce the claim by the induction
hypothesis.
\end{proof}
\begin{cor} \label{C:extend etale}
Let $W$ be a $B$-pair.
Then for $n$ sufficiently large, there exists a 
short exact sequence $0 \to W \to X \to Y 
\to 0$ such that $X$ is isoclinic of slope $-n$.
\end{cor}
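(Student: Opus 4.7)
The plan is to deduce the corollary from Proposition~\ref{P:extend etale} by twisting $W$ into a $B$-pair with nonnegative slopes, applying the proposition, and then twisting back. For each integer $m$, Lemma~\ref{L:rank 1} furnishes a rank $1$ $B$-pair $T_m = (\bB_e, t^m \bB_{\dR}^+)$, which has degree (and hence slope) equal to $m$; its dual $T_m^\dual \cong (\bB_e, t^{-m} \bB_{\dR}^+)$ has slope $-m$, and both are automatically isoclinic.

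Let $s_1 < \cdots < s_l$ be the slopes of $W$ from Theorem~\ref{T:filtration}, and choose an integer $n \geq \max(0, -s_1)$. The preparatory step is to check that $W \otimes T_n$ has all nonnegative slopes: since $T_n$ is free of rank $1$ in each component, the functor $-\otimes T_n$ is exact on the category of $B$-pairs, so applying it to the slope filtration of $W$ yields a filtration of $W \otimes T_n$ whose successive quotients are isoclinic of slopes $s_i + n \geq 0$ by Lemma~\ref{L:maps}(c). By the uniqueness clause of Theorem~\ref{T:filtration} this is the slope filtration of $W \otimes T_n$, so all its slopes are nonnegative as claimed.

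With this in hand, Proposition~\ref{P:extend etale} produces a short exact sequence
\[
0 \to W \otimes T_n \to X' \to Y' \to 0
\]
with $X'$ \'etale. Applying the exact functor $-\otimes T_n^\dual$ yields
\[
0 \to W \to X' \otimes T_n^\dual \to Y' \otimes T_n^\dual \to 0,
\]
and setting $X = X' \otimes T_n^\dual$, Lemma~\ref{L:maps}(c) implies that $X$ is isoclinic of slope $0 + (-n) = -n$, as required. There is no serious obstacle in this argument: the only substantive ingredient is the compatibility of slope with rank $1$ twists from Lemma~\ref{L:maps}(c), together with exactness of tensoring by a rank $1$ object. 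The conclusion holds for every $n \geq \max(0, -s_1)$, which realizes the ``sufficiently large $n$'' in the statement.
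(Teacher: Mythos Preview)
Your argument is correct and follows exactly the approach of the paper: twist $W$ by a rank~$1$ $B$-pair of degree $n$ so that the slopes become nonnegative, apply Proposition~\ref{P:extend etale}, and twist back. The paper's proof is a single sentence (``Apply Proposition~\ref{P:extend etale} to $W \otimes T$, for $T$ of rank $1$ and degree $n$''); you have simply filled in the details, including the verification via Lemma~\ref{L:maps}(c) and Theorem~\ref{T:filtration} that the twist shifts all slopes by $n$.
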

\begin{proof}
Apply Proposition~\ref{P:extend etale} to $W \otimes T$, for $T$
of rank 1 and degree $n$.
\end{proof}

\begin{lemma} \label{L:coverable isoclinic}
Let $W$ be a $B$-quotient. Then for $n$ sufficiently large, 
we can write $W = (X_1 \hookrightarrow X_2)$ 
with $X_1$ isoclinic of slope $-n$.
\end{lemma}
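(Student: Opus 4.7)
The plan is to reduce the statement to Corollary~\ref{C:extend etale} via a pushout. Given a $B$-quotient $W = (W_1 \hookrightarrow W_2)$ (so $W_1, W_2$ are $B$-pairs), Corollary~\ref{C:extend etale} applied to $W_1$ produces, for all sufficiently large $n$, a short exact sequence $0 \to W_1 \to X_1 \to Y \to 0$ of $B$-pairs with $X_1$ isoclinic of slope $-n$. My goal is to combine this with the inclusion $W_1 \hookrightarrow W_2$ to build a $B$-pair $X_2 \supset X_1$ whose quotient by $X_1$ recovers $W_2/W_1$.

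Concretely, I would form the pushout $X_2 = W_2 \oplus_{W_1} X_1$ in the abelian category of $B$-quotients (equivalently, by Theorem~\ref{T:equiv}, in the category of $(\phi,\Gamma)$-quotients, where the pushout is visible directly on the underlying $\calR$-modules). The pushout square yields two canonical short exact sequences:
\[
0 \to X_1 \to X_2 \to W_2/W_1 \to 0, \qquad 0 \to W_2 \to X_2 \to Y \to 0.
\]
Applying the first bulleted fact recalled before Lemma~\ref{L:twist} (namely, that an extension of a $B$-pair by a $B$-pair is again a $B$-pair) to the second sequence shows that $X_2$ is itself a $B$-pair. The first sequence then exhibits $(X_1 \hookrightarrow X_2)$ as a $B$-quotient whose cokernel coincides with that of $W$, so the two represent isomorphic objects of the abelian category of $B$-quotients, as required.

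I do not anticipate a serious obstacle: the argument is essentially a diagram chase, and the nontrivial input is Corollary~\ref{C:extend etale}, which is already in hand. The only mild technical point is confirming that $X_2$ lies in the category of $B$-pairs rather than merely in that of generalized $(\phi,\Gamma)$-modules, and this is exactly what the first bulleted fact supplies. Injectivity of $X_1 \hookrightarrow X_2$ and the identification of cokernels are formal consequences of working in an abelian category.
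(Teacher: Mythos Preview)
Your proposal is correct and follows essentially the same route as the paper: apply Corollary~\ref{C:extend etale} to $W_1$, form the pushout $X_2 = W_2 \oplus_{W_1} X_1$, use the fact that an extension of $B$-pairs is a $B$-pair to see that $X_2$ is a $B$-pair, and note that the pushout square identifies $W$ with $(X_1 \hookrightarrow X_2)$. The only minor sharpening I would suggest is to emphasize that it is the pushout diagram itself (not merely the equality of cokernels in the abstract) that furnishes the canonical isomorphism $W \cong (X_1 \hookrightarrow X_2)$ in the category of $B$-quotients.
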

\begin{proof}
Put $W = (W_1 \hookrightarrow W_2)$.
By Corollary~\ref{C:extend etale}, for $n$ large,
we can find a short exact sequence $0 \to W_1 \to X_1 \to Y \to 0$
of $B$-pairs with $X_1$ isoclinic of slope $-n$.
By forming a pushout, we obtain a diagram
\[
\xymatrix{
0 \ar[r] & W_1 \ar[d] \ar[r] & W_2 \ar[d] \ar[r] & W
\ar[d] \ar[r] & 0 \\
0 \ar[r] & X_1 \ar[r] & X_2 \ar[r] & W \ar[r] & 0
}
\]
of $B$-quotients with $0 \to W_2 \to X_2 \to Y \to 0$ exact.
In particular, $X_2$ is a $B$-pair, not just a $B$-quotient,
and $W \cong (X_1 \hookrightarrow X_2)$.
\end{proof}

\begin{prop} \label{P:extend coverable}
Let $D_1, D_2$ be $(\phi, \Gamma)$-quotients. Then the group $\Ext(D_2, D_1)$
is the same whether computed in the category of $(\phi, \Gamma)$-quotients
or in the category of generalized $(\phi, \Gamma)$-modules.
Consequently, the former group is equal to $H^1(D_2^\dual \otimes D_1)$.
\end{prop}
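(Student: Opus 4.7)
The plan is to establish the first assertion by showing that any short exact sequence $0 \to D_1 \to D \to D_2 \to 0$ of generalized $(\phi, \Gamma)$-modules with $D_1, D_2$ being $(\phi, \Gamma)$-quotients has middle term $D$ that is again a $(\phi, \Gamma)$-quotient. Since the $(\phi, \Gamma)$-quotients form a full subcategory of the generalized $(\phi, \Gamma)$-modules, this yields equality of the Yoneda $\Ext^1$ groups. The identification $\Ext(D_2, D_1) = H^1(D_2^\dual \otimes D_1)$ then falls out of the standard computation of $\Ext^1$ in the ambient category via the Colmez complex.

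The first reduction is to the case that $D_2$ is a genuine $(\phi, \Gamma)$-module: choose a presentation $D_2 = X_2/X_1$ with $X_1 \hookrightarrow X_2$ both $(\phi, \Gamma)$-modules, and pull the given extension back along $X_2 \twoheadrightarrow D_2$ to produce
\[
0 \to D_1 \to E \to X_2 \to 0
\]
together with a natural surjection $E \twoheadrightarrow D$ whose kernel is a copy of $X_1$; since quotients of $(\phi, \Gamma)$-quotients remain $(\phi, \Gamma)$-quotients, it suffices to show $E$ is one. Next, apply Lemma~\ref{L:coverable isoclinic} (translated via Theorem~\ref{T:equiv}) to present $D_1 = Y_2/Y_1$ with $Y_1$ a $(\phi, \Gamma)$-module isoclinic of slope $-n$ for any $n \gg 0$ we wish. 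I then try to lift the class of $E$ along the map $\Ext^1(X_2, Y_2) \to \Ext^1(X_2, D_1)$ arising from $0 \to Y_1 \to Y_2 \to D_1 \to 0$; the obstruction lies in $\Ext^2(X_2, Y_1)$, which is computed by the Colmez complex as $H^2(X_2^\dual \otimes Y_1)$ and hence by Theorem~\ref{T:liu}(d) is dual to $\Hom(Y_1 \otimes \omega^{-1}, X_2)$. Because $\omega$ is \'etale of slope $0$, the object $Y_1 \otimes \omega^{-1}$ is isoclinic of slope $-n$; applying Lemma~\ref{L:maps}(b) along the slope filtration of $X_2$ (Theorem~\ref{T:filtration}) forces this Hom to vanish once $n$ exceeds the negative of the minimal slope of $X_2$. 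With the obstruction killed, the lift produces $F$ fitting into $0 \to Y_2 \to F \to X_2 \to 0$ with $F/Y_1 \cong E$; as an extension of the free $\calR$-module $X_2$ by the free $\calR$-module $Y_2$, this $F$ is itself finite free, hence a genuine $(\phi, \Gamma)$-module. Therefore $E \cong F/Y_1$, and then $D \cong E/X_1$, is a $(\phi, \Gamma)$-quotient.

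The principal obstacle is confirming that the Yoneda $\Ext^2(X_2, Y_1)$ in the category of generalized $(\phi, \Gamma)$-modules is really computed by $H^2(X_2^\dual \otimes Y_1)$ (so that Liu's duality, stated for free inputs, actually applies), and that the slope-controlled presentation guaranteed by Lemma~\ref{L:coverable isoclinic} gives enough flexibility to drive the resulting Hom to zero. The remaining manipulations are diagrammatic, relying only on the fact (recorded in the preamble to the appendix) that an extension of a $(\phi, \Gamma)$-module by a $(\phi, \Gamma)$-module is again a $(\phi, \Gamma)$-module.
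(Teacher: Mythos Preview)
Your proof is correct and follows essentially the same route as the paper: reduce to the case where the right-hand term is a genuine $(\phi,\Gamma)$-module by pullback, then present $D_1$ with an isoclinic kernel of very negative slope (Lemma~\ref{L:coverable isoclinic}) and use vanishing of the relevant $H^2$ to lift the extension to one of free modules. The paper performs the two reductions in the opposite order and cites Lemma~\ref{L:neg slopes} directly rather than unpacking it via duality and Lemma~\ref{L:maps}(b), but this is cosmetic.

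One remark on your ``principal obstacle'': the paper sidesteps any identification of Yoneda $\Ext^2$ with $H^2$. Rather than invoking the Yoneda long exact sequence, it tensors $0 \to Y_1 \to Y_2 \to D_1 \to 0$ by the free module $X_2^\dual$ and uses the long exact sequence of the Herr--Colmez complex directly,
\[
H^1(X_2^\dual \otimes Y_2) \to H^1(X_2^\dual \otimes D_1) \to H^2(X_2^\dual \otimes Y_1),
\]
so only the identification $H^1 = \Ext^1$ (which you already grant) is needed to interpret the surjectivity of the first map as the desired lifting statement. With that rephrasing your worry disappears.
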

\begin{proof}
 Let
$0 \to D_1 \to D \to D_2 \to 0$ be an extension of generalized
$(\phi, \Gamma)$-modules.
First suppose $D_2$ is actually a $(\phi, \Gamma)$-module.
By Lemma~\ref{L:coverable isoclinic}, for $n$ large,
we can write $D_1$ as a quotient $E_2/E_1$ of $(\phi, \Gamma)$-modules
with $E_1$ isoclinic of slope $-n$. For $n$ large,
we have $H^2(D_2^\dual \otimes E_1) = 0$ by
Lemma~\ref{L:neg slopes}.
Since
\[
H^1(D_2^\dual \otimes E_2) \to H^1(D_2^\dual \otimes D_1)
\to H^2(D_2^\dual \otimes E_1) = 0
\]
is exact, we can lift the original exact sequence to a sequence
$0 \to E_2 \to F \to D_2 \to 0$. 
Since $E_2$ and $D_2$ are $(\phi, \Gamma)$-modules, so is $F$;
since  $F \to D$ is surjective, $D$ is a $(\phi, \Gamma)$-quotient.

In the general case, write $D_2$ as a quotient $E_2/E_1$ of
$(\phi, \Gamma)$-modules.
By forming a pullback, we obtain a diagram
\[
\xymatrix{
0 \ar[r] & D_1 \ar[d] \ar[r] & F
\ar[d] \ar[r] & E_2 \ar[d] \ar[r] & 0 \\
0 \ar[r] & D_1 \ar[r] & D \ar[r] & D_2 \ar[r] & 0
}
\]
of generalized $(\phi, \Gamma)$-modules, with $F \to D$ surjective.
By the previous paragraph, $F$ is a $(\phi, \Gamma)$-quotient, as then is $D$.
\end{proof}

\begin{prop} \label{P:efface h2}
For any $B$-quotient $W$, there exists an injection $W \hookrightarrow X$
with $H^2(X) = 0$.
\end{prop}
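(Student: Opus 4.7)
The plan is to reduce to a situation where a surrounding $B$-pair with only very negative slopes forces $H^2$ to vanish. Write $W = (W_1 \hookrightarrow W_2)$ as an inclusion of $B$-pairs. Applying Corollary~\ref{C:extend etale} to the $B$-pair $W_2$, for $n$ sufficiently large I obtain a short exact sequence $0 \to W_2 \to Y \to Z \to 0$ of $B$-pairs with $Y$ isoclinic of slope $-n$. Since $W_1 \hookrightarrow W_2 \hookrightarrow Y$, the pair $(W_1 \hookrightarrow Y)$ is a legitimate $B$-quotient, which I will denote $X$. The inclusion $W_2 \hookrightarrow Y$, which restricts to the identity on $W_1$, induces a morphism of $B$-quotients $W \to X$ that is a monomorphism: under the equivalence of Theorem~\ref{T:equiv}, it corresponds to the injective map $D(W_2)/D(W_1) \hookrightarrow D(Y)/D(W_1)$.

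To see that $H^2(X) = 0$, I would exploit the short exact sequence of $B$-quotients
\[
0 \to W_1 \to Y \to X \to 0,
\]
in which $W_1$ and $Y$ appear as $B$-quotients via $V \mapsto (0 \hookrightarrow V)$. Passing to $(\phi, \Gamma)$-quotients via Theorem~\ref{T:equiv}, the Herr complex---which is three-term in each component---produces a short exact sequence of complexes, hence a long exact sequence in cohomology. Its relevant segment is
\[
H^2(Y) \longrightarrow H^2(X) \longrightarrow H^3(W_1).
\]
The left-hand group vanishes by Lemma~\ref{L:neg slopes} because $Y$ is isoclinic of slope $-n < 0$, and the right-hand group vanishes trivially because the Herr complex has length three; hence $H^2(X) = 0$.

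The one point I expect to require some care is the categorical bookkeeping: confirming that the morphism $W \to X$ I have written down, via the somewhat elaborate combinatorial definition of morphisms of $B$-quotients (as appropriate subobjects of $W_2 \oplus Y$), really is a monomorphism in the abelian category. The cleanest way to handle this is to transport the entire argument across Theorem~\ref{T:equiv} and work exclusively with $(\phi, \Gamma)$-quotients, where injectivity of $D(W_2)/D(W_1) \to D(Y)/D(W_1)$ is immediate. Apart from this, the proof is a clean application of the long exact cohomology sequence together with the two ingredients already in hand: Lemma~\ref{L:neg slopes} (negative slopes kill $H^2$) and the formal fact that the Herr complex has no $H^3$.
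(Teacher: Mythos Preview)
Your argument is correct, and in fact streamlines the paper's proof considerably. The paper proceeds by first splitting off the maximal torsion subobject $T \subset W$, embedding the $B$-pair $W/T$ into an isoclinic $X_0$ of negative slope via Corollary~\ref{C:extend etale}, and then lifting the extension $0 \to T \to W \to W/T \to 0$ along $W/T \hookrightarrow X_0$ to produce $0 \to T \to X \to X_0 \to 0$; the vanishing $H^2(X)=0$ then follows from $H^2(T)=0$ and $H^2(X_0)=0$. That lifting step requires Proposition~\ref{P:extend coverable} to guarantee that the resulting extension stays in the category of $B$-quotients rather than merely in generalized $(\phi,\Gamma)$-modules.

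Your route bypasses all of this: you apply Corollary~\ref{C:extend etale} directly to the ambient $B$-pair $W_2$, and then $X = (W_1 \hookrightarrow Y)$ is a $B$-quotient \emph{by definition}, so no appeal to Proposition~\ref{P:extend coverable} is needed. The vanishing of $H^2(X)$ then drops out of the tautological exact sequence $0 \to W_1 \to Y \to X \to 0$ together with $H^2(Y)=0$ (Lemma~\ref{L:neg slopes}) and $H^3=0$. This is both shorter and conceptually cleaner; the paper's detour through the torsion part and the Ext-lifting is unnecessary for this statement. Your remark about verifying that $W \to X$ is a monomorphism by passing to $(\phi,\Gamma)$-quotients is exactly right and is the standard way to handle it.
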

\begin{proof}
Let $T$ be the maximal torsion subobject of $W$.
By Theorem~\ref{T:liu}, $H^2(T) = 0$.
By Corollary~\ref{C:extend etale}, for $n > 0$ large,
we can construct an extension 
$0 \to W/T \to X_0 \to X_1 \to 0$ of $B$-pairs with
$X_0$ isoclinic of slope $-n$. By Lemma~\ref{L:neg slopes},
$H^2(X_0) = 0$.

In the exact sequence
\[
H^1(X_0^\dual \otimes T) \to H^1((W/T)^\dual \otimes T) \to H^2(X_1^\dual 
\otimes T),
\]
the last term vanishes by Theorem~\ref{T:liu}
because $X_1^\dual \otimes T$ is torsion.
Hence we can lift the extension $0 \to T \to W \to W/T \to 0$
to an extension $0 \to T \to X \to X_0 \to 0$.
(For this we need Proposition~\ref{P:extend coverable}, to
assert that the extension of generalized $(\phi, \Gamma)$-modules
actually arises from an an extension of
$B$-quotients.)
Now $W$ injects into $X$,
and $0 = H^2(T) \to H^2(X) \to H^2(X_0) = 0$ is exact, so $H^2(X) = 0$
as desired.
\end{proof}

\begin{proof}[Proof of Theorem~\ref{T:liu-bpairs}]
The functor $H^1$ is weakly effaceable because 
it coincides with the Ext group by
Proposition~\ref{P:extend coverable}.
(Namely, the class in $H^1(W)$ corresponding to an extension
$0 \to W \to X \to W_0 \to 0$ always vanishes in $H^1(X)$.)
The functor $H^2$ is effaceable by Proposition~\ref{P:efface h2}.
As in \cite[\S 2]{weibel}, the $H^i$ thus form a universal $\delta$-functor;
as noted above, this plus
Theorem~\ref{T:liu} proves everything.
\end{proof}

One can prove some variant statements by the same argument.
For one, the $H^i$ form a universal
$\delta$-functor on the full category of
generalized $(\phi, \Gamma)$-modules. For another,
the $H^i$ can be computed by taking derived functors in the ind-category of
generalized $(\phi, \Gamma)$-modules, i.e., the category of direct limits
of generalized $(\phi, \Gamma)$-modules.

\section*{Acknowledgments}

Thanks to the organizers of the 2007 Journ\'ees Arithm\'etiques
for the invitation to present this material there,
to Jay Pottharst for reviewing an earlier draft,
to Peter Schneider for suggesting the 
formulation of Theorem~\ref{T:liu-bpairs} in terms of satellite
functors, and to Laurent Berger and Ruochuan Liu for helpful
discussions. The author was supported by
NSF CAREER grant DMS-0545904 and a Sloan Research Fellowship.

\end{document}